\newcommand\PP{\mathbb P}
\newcommand\C{\mathbb C}
\newcommand\Q{\mathbb Q}
\newcommand\R{\mathbb R}
\newcommand\Z{\mathbb Z}
\newcommand\N{\mathbb N}
\newcommand{\V}{\mathbb V}
\newcommand{\U}{\mathbb U}
\newcommand{\eps}{\varepsilon}
\renewcommand{\d}{\mathfrak{d}}
\newcommand{\D}{\mathfrak{D}}
\newcommand{\m}{\mathfrak{m}}
\newcommand\Aut{\operatorname{Aut}}
\newcommand\Ad{\operatorname{Ad}}
\newcommand\ad{\operatorname{ad}}
\newcommand{\bra}{\langle}
\newcommand{\ket}{\rangle}
\newcommand{\wed}{\wedge}
\newcommand{\del}{\partial}
\newcommand{\li}{\operatorname{Li}}
\renewcommand{\S}{\operatorname{S}}
\renewcommand{\th}{\theta}
\newcommand{\g}{\mathfrak{g}}
\makeatletter \@addtoreset{equation}{section} \makeatother
\newtheorem{thm}{Theorem}[section]
\newtheorem{prop}[thm]{Proposition}
\newtheorem{lem}[thm]{Lemma}
\newtheorem{cor}[thm]{Corollary}
\newtheorem{defn}[thm]{Definition}
\newenvironment{exa}{\noindent\textbf{Example}.}{\\}
\newenvironment{rmk}{\noindent\textbf{Remark}.}{\\}
\begin{document}

\title[]{Block-G\"ottsche invariants from wall-crossing}
\author{S. A. Filippini}
\email{saraangela.filippini@math.uzh.ch}
\address{Institut f\"ur Mathematik\\
Universit\"at Z\"urich, Wintherthurerstrasse 190, 8057 Z\"urich, Schweiz.}

\author{J. Stoppa}
\email{jacopo.stoppa@unipv.it}
\address{Dipartimento di Matematica ``F. Casorati"\\
Universit\`a di Pavia, via Ferrata 1, 27100 Pavia, Italia.
}
\date{}

\dedication{Dedicated to the memory of Kentaro Nagao}

\classification{14N35 (primary), 14T05, 16G20 (secondary)} 
\keywords{Tropical counts, Wall-crossing, Gromov-Witten invariants} 
\thanks{This research was supported in part by the Hausdorff Institute for Mathematics, Bonn (JHRT ``Mathematical Physics") and GRIFGA 2012-2015. The research leading to these results has received funding from the European Research Council under the European Union's Seventh Framework Programme (FP7/2007-2013) / ERC Grant agreement no. 307119.} 

\begin{abstract} We show how some of the refined tropical counts of Block and G\"ottsche emerge from the wall-crossing formalism. This leads naturally to a definition of a class of putative $q$-deformed Gromov-Witten invariants. We prove that this coincides with another natural $q$-deformation, provided by a result of Reineke and Weist in the context of quiver representations, when the latter is well defined. 
\end{abstract}

\maketitle

\section{Introduction} 

Recently Block and G\"ottsche \cite{gottsche} (see also the earlier accounts in \cite{vivek} section 6 and \cite{mik} section 1) introduced a refined tropical count for plane tropical curves, where the usual Mikhalkin multiplicity is replaced by a function taking values in Laurent polynomials in one variable. The original motivation for Block and G\"ottsche's proposal is connected with a generalization of G\"ottsche's conjecture and a refinement of Severi degrees (see \cite{gott} for the original conjecture, and \cite{vivek} for a discussion of its proof(s) and conjectural refinements). The tropical invariance of such counts was proved by Itenberg and Mikhalkin \cite{mik}. This invariance is perhaps surprising from a purely tropical point of view. 

The first purpose of this paper is to point out a different perspective from which the definition of the Block-G\"ottsche multiplicity and the invariance of some of the associated refined tropical counts look completely natural. This point of view is provided by the wall-crossing formula for refined Donaldson-Thomas invariants. 

In section \ref{background} we recall a method due to Gross, Pandharipande and Siebert \cite{gps} (based on the tropical vertex group $\V$) that allows to express the wall-crossing of numerical Donaldson-Thomas invariants\footnote{This method is very general and also works for factorizations in $\V$ which do not correspond to some wall-crossing. We also point out for experts that we are referring here to a \emph{wall-crossing of the first kind} in the sense of \cite{ks} section 2.3 (in the language of mathematical physics, this corresponds to crossing the wall of marginal stability). Although we will not discuss this in the present paper, the group $\mathbb{V}$ and its $q$-deformation introduced in section 3 below also play an important role in \emph{wall-crossing of the second kind} (\cite{ks} section 2.4), corresponding to a change of $t$-structure in a triangulated category, e.g. by tilting. See \cite{kentaro} for a precise result in the context of quiver mutations.} in terms of invariants enumerating \emph{rational} tropical curves; the fundamental example is given by formulae \eqref{IntroCommutator} and \eqref{VtoTrop} below. At the end of section \ref{background} we motivate the need to go from numerical to refined wall-crossing formulae: this is natural from the Donaldson-Thomas point of view (the main reference being the work of Kontsevich and Soibelman \cite{ks}). The details of the $q$-deformation are given in section \ref{quantum}. In section \ref{scattering} we show that the definition of the Block-G\"ottsche multiplicity, and the invariance of the related counts for rational tropical curves, are essentially equivalent to an extension of the methods of \cite{gps} sections 1 and 2 to the refined wall-crossing formulae. This is summarized in Corollary \ref{ScatterToTrop}.  In section \ref{mpsSection} (Proposition \ref{qVtoTropSpec}) we finally give the refinement of the basic GPS formula \eqref{VtoTrop}. For the sake of completeness in the Appendix we sketch the proof of the invariance of $q$-deformed tropical counts for rational curves.  

The other main theme of this paper is that the approach above leads naturally to a $q$-deformation of a class of Gromov-Witten invariants. Indeed \cite{gps} shows that computing commutators in $\V$ is \emph{equivalent} to the calculation of a class of genus zero Gromov-Witten invariants $N[({\bf P}_1, {\bf P}_2)]$ of (blowups of) weighted projective planes, parametrized by a pair of partitions $({\bf P}_1, {\bf P}_2)$ (see \eqref{gpsThm} and \eqref{gpsThm2} below). By adapting this argument to the $q$-deformed case we find a natural $q$-deformation of $N[({\bf P}_1, {\bf P}_2)]$ in terms of Block-G\"ottsche counts, namely Definition \ref{refinedGW}. On the other hand when the vector $(|{\bf P}_1|, |{\bf P}_2|)$ is primitive $N[({\bf P}_1, {\bf P}_2)]$ is a BPS invariant in the sense of \cite{gps} section 6.3. In this special case a result of Reineke and Weist \cite{reinweist} shows that $N[({\bf P}_1, {\bf P}_2)]$ equals the Euler characteristic of a moduli space of quiver representations $\mathcal{M}({\bf P}_1, {\bf P}_2)$. A different choice of $q$-deformation is then the symmetrized Poincar\'e polynomial $q^{-\frac{1}{2}\dim \mathcal{M}({\bf P}_1, {\bf P}_2)}P(\mathcal{M}({\bf P}_1, {\bf P}_2))(q)$. Our main result in this connection is Theorem \ref{comparison} which shows that the two choices coincide. A key ingredient is the Manschot-Pioline-Sen formula \cite{mps}; indeed it follows from the proof of Theorem \ref{comparison} that the MPS formula in this context can be interpreted precisely as the equality of the two ``quantizations".  We conclude section \ref{mpsSection} with some further remarks on the $q$-deformed Gromov-Witten invariants, touching on explicit formulae, integrality, and the connection with refined Severi degrees and motivic Donaldson-Thomas invariants.
\acknowledgements{We are grateful to Pierrick Bousseau, Lothar G\"ottsche, Rahul Pandharipande, Mar\-kus Reineke, Richard Thomas, Thorsten Weist for some very helpful discussions, as well as to the anonymous Referee for suggesting many improvements.}

\section{Tropical vertex}\label{background}

The tropical vertex group $\mathbb{V}$ is a subgroup of the group of formal $1$-parameter families of automorphisms of the complex algebraic torus $\C^*\times\C^*$, $\mathbb{V} \subset \Aut_{\C[[t]]}\C[x, x^{-1}, y, y^{-1}][[t]]$. Fix integers $a, b$ and a function $f_{(a, b)} \in \C[x, x^{-1}, y, y^{-1}][[t]]$ of the form
\begin{equation}\label{admissibleFunct}
f_{(a, b)} = 1 + t\,x^a y^b\,g(x^a y^b, t)
\end{equation}
for a formal power series $g \in \C[z][[t]]$. To this we attach an element $\theta_{(a, b), f_{(a, b)}} \in \mathbb{V}$ defined by
\begin{equation*}
\theta_{(a, b), f_{(a, b)}}(x) = x\,f^{-b}_{(a, b)}, \quad \theta_{(a, b), f_{(a, b)}}(y) = y\,f^a_{(a, b)}. 
\end{equation*}

Then we can define $\mathbb{V}$ as the completion with respect to $(t)\subset\C[[t]]$ of the subgroup of $\Aut_{\C[[t]]}\C[x, x^{-1}, y, y^{-1}][[t]]$ generated by all the transformations $\theta_{(a, b), f_{(a, b)}}$ (as $(a, b)$ and $f_{(a, b)}$ vary). Elements of $\mathbb{V}$ are ``formal symplectomorphisms", i.e. they preserve the holomorphic symplectic form $\frac{dx}{x}\wed\frac{dy}{y}$. The basic question about $\mathbb{V}$ is to compute a general commutator, 
$$[\theta_{(a_1, b_1), f_1}, \theta_{(a_2, b_2), f_2}] = \theta^{-1}_{(a_2, b_2), f_2} \theta_{(a_1, b_1), f_1} \theta_{(a_2, b_2), f_2} \theta^{-1}_{(a_1, b_1), f_1}.$$
Despite its elementary flavour, it turns out that this problem plays a crucial role in a number of contexts in algebraic geometry, most importantly for us in wall-crossing formulae for counting invariants in abelian and triangulated categories (see \cite{ks}). Suppose for definiteness that $a_1, b_1, a_2, b_2$ are all nonnegative, and that $(a_1, b_1)$ follows $(a_2, b_2)$ in clockwise order. Then there exists a unique, possibly infinite (but countable) collection of \emph{primitive} vectors $(a, b)$ with positive entries, and attached functions $f_{(a, b)}$ (of the form \eqref{admissibleFunct}) such that
$\theta^{-1}_{(a_2, b_2), f_2} \theta_{(a_1, b_1), f_1} \theta_{(a_2, b_2), f_2} \theta^{-1}_{(a_1, b_1), f_1} = \prod^{\to}_{(a, b)}\theta_{(a, b), f_{(a, b)}}$.
Here $\prod^{\to}$ denotes the operation of writing products of finite subcollections of group elements $\theta_{(a, b), f_{(a, b)}}$ from left to right so that the slopes of $(a, b)$ in $\R^2$ are decreasing (i.e. in clockwise order), and then taking the direct limit over all finite collections. Gross, Pandharipande and Siebert have shown that the problem of computing the functions $f_{(a, b)}$ carries a surprisingly rich intrinsic geometry, which involves the virtual counts of rational curves in weighted projective planes with prescribed singularities and tangencies. To formulate the simplest result of this type, we fix two integers $\ell_1, \ell_2$ and consider the transformations $\theta_{(1, 0), (1+ tx)^{\ell_1}}, \theta_{(0, 1), (1+ty)^{\ell_2}}$. Let us define functions $f_{(a, b)}$ as above (in particular, for $(a, b)$ primitive) by 
\begin{equation}\label{IntroCommutator}
[\theta_{(1, 0), (1+ tx)^{\ell_1}}, \theta_{(0, 1), (1+ ty)^{\ell_2}}] = \prod^{\to}_{(a, b)}\theta_{(a, b), f_{(a, b)}}.
\end{equation}
Since $f_{(a, b)}$ has the form \eqref{admissibleFunct} we may take its logarithm as a formal power series, which must then be of the form $\log f_{(a, b)} = \sum_{k \geq 0} c^{(a, b)}_k (tx)^{ak}(ty)^{bk}$. Let us write ${\bf P}$ for an ordered partition and $|{\bf P}|$ for its size (the sum of all its parts). Theorem 0.1 of \cite{gps} gives a formula for the coefficients $c^{(a, b)}_k$ in terms of certain Gromov-Witten invariants $N_{(a, b)}{[({\bf P}_a, {\bf P}_b)]} \in \Q$,
\begin{equation}\label{gpsThm}
c^{(a, b)}_k = k \sum_{|{\bf P}_a| = k a} \sum_{|{\bf P}_b| = kb} N_{(a, b)}{[({\bf P}_a, {\bf P}_b)]},
\end{equation}
where the length of ${\bf P}_a$ (respectively ${\bf P}_b$) is $\ell_1$ (respectively $\ell_2$). Here $N_{(a, b)}{[({\bf P}_a, {\bf P}_b)]}$ is the virtual count of rational curves contained in the weighted projective plane $\PP(a, b, 1)$, which must have prescribed singular points along the two toric divisors $D_1, D_2$ dual to the rays spanned by $(-1, 0)$ and $(0, -1)$ respectively, lying away from the torus fixed points, and with multiplicities specified by the ordered partitions ${\bf P}_a, {\bf P}_b$. To make this rigorous one blows up a number of fixed points on $D_1, D_2$ and imposes a suitable degree condition. Moreover one has to make sense of Gromov-Witten theory away from the torus fixed points. See \cite{gps} Section 0.4 for a precise definition. Additionally the curves must be tangent to order $k$ (at an unspecified point) to the divisor $D_{\rm out}$ dual to the ray spanned by $(a, b)$.

The equality \eqref{gpsThm} actually arises from the enumeration of certain plane tropical curves. Consider a \emph{weight vector} ${\bf w} = ({\bf w}_1, {\bf w}_2)$, where each ${\bf w}_i$ is a collection of integers $w_{ij}$ (for $1 \leq i \leq 2$ and $1 \leq j \leq l_i$) such that  $1 \leq w_{i1} \leq w_{i2} \leq \dots \leq w_{i{l_i}}$. For  $1 \leq j \leq l_1$ choose a general collection of parallel lines $\mathfrak{d}_{1j}$ in the direction $(1, 0)$, respectively $\mathfrak{d}_{2j}$ in the direction $(0, 1)$ for $1 \leq j \leq l_2$. We attach the weight $w_{ij}$ to the line $\mathfrak{d}_{ij}$, and think of the lines $\mathfrak{d}_{ij}$ as ``incoming" unbounded edges for connected, rational tropical curves $\Upsilon \subset \R^2$. We prescribe that such curves $\Upsilon$ have a single additional ``outgoing" unbounded edge in the direction $(|{\bf w}_1|,|{\bf w}_2|)$. Let us denote by $\mathcal{S}({\bf w})$ the finite set of such tropical curves $\Upsilon$ (for a general, fixed choice of ends $\mathfrak{d}_{ij}$). Let $(|{\bf w}_1|, |{\bf w}_2|) = (k a, k b)$ for some positive integer $k$ and primitive $(a, b)$. We denote by $N^{\rm trop}_{(a, b)}({\bf w}) = \#^{\mu} \mathcal{S}({\bf w})$ the tropical count of curves $\Upsilon$ as above, i.e. the number of elements of $\mathcal{S}({\bf w})$ counted with the usual multiplicity $\mu$ of tropical geometry (see \cite{mikhalkin}). It is known that $\#^{\mu} \mathcal{S}({\bf w})$ does not depend on the general choice of unbounded edges $\mathfrak{d}_{ij}$ (see \cite{mikhalkin}, \cite{gathmark}). An application of \cite{gps} Theorem 2.8 gives 
\begin{equation}\label{VtoTrop}
c^{(a, b)}_k = k \sum_{|{\bf P}_a| = k a} \sum_{|{\bf P}_b| = kb} \sum_{{\bf w}} \prod^2_{i = 1} \frac{R_{{\bf P}_i | {\bf w}_i}}{|\Aut({\bf w}_i)|} N^{\rm trop}_{(a, b)}({\bf w}),
\end{equation}
where the inner sum is over weight vectors ${\bf w}$ such that $|{\bf w}_i | = {\bf P}_i$ and $R_{{\bf P}_i | {\bf w}_i}$, $|\Aut({\bf w}_i)|$ are certain combinatorial coefficients. The connection to Gromov-Witten theory is established through the identity
\begin{equation}\label{TropToGW}
N_{(a, b)}{[({\bf P}_a, {\bf P}_b)]} = \sum_{{\bf w}} \prod^2_{i = 1} \frac{R_{{\bf P}_i | {\bf w}_i}}{|\Aut({\bf w}_i)|} N^{\rm trop}_{(a, b)}({\bf w}),
\end{equation}
which follows from \cite{gps} Theorems 3.4, 4.4 and Proposition 5.3. In fact, the invariants $N_{(a, b)}{[({\bf P}_a, {\bf P}_b)]}$ are completely determined by factorizations in an extended tropical vertex group. Introducing auxiliary variables $s_1, \ldots, s_{\ell_1}$, $t_1, \ldots, t_{\ell_2}$ one considers the problem of computing the commutator $[\prod^{\ell_1}_{i=1} \theta_{(1, 0), 1 + s_i x}, \prod^{\ell_2}_{j=1} \theta_{(0, 1), 1 + t_j y}]$, with the obvious extension of the notation introduced above. Then as shown in Theorem 5.4 of \cite{gps} one can refine \eqref{gpsThm} to show that the the corresponding weight functions satisfy
\begin{equation}\label{gpsThm2}
\log f_{(a, b)} = k \sum_{|{\bf P}_a| = k a} \sum_{|{\bf P}_b| = kb} N_{(a, b)}{[({\bf P}_a, {\bf P}_b)]} s^{{\bf P}_a} t^{{\bf P}_b} x^{ka}y^{kb}.
\end{equation}
Now we make the basic observation that operators such as $\theta_1 = \theta_{(1, 0), (1+ tx)^{\ell_1}}$ and $\theta_2 = \theta_{(0, 1), (1+ ty)^{\ell_2}}$ admit natural $q$-deformations or ``quantizations", acting on the $\C[[t]]$-algebra generated by quantum variables $\hat{x} \hat{y} = q \hat{y}\hat{x}$. This is motivated by their special role in Donaldson-Thomas theory, where they represent the action of a stable spherical object (\cite{ks} section 6.4). Roughly speaking then $q$-deforming corresponds to passing from Euler characteristics of moduli spaces to their Poincar\'e polynomials. More generally in mathematical physics these operators reflect the spetrum of BPS states of theories belonging to a suitable class, and their $q$-deformation is connected with refined indices counting such states (see e.g. \cite{pioline} for an introduction to this circle of ideas). In the present simple example $\hat{\theta}_1(\hat{x}) = \hat{x}, \hat{\theta}_1(\hat{y}) = \hat{y}(1 + q^{\frac{1}{2}} t \hat{x})^{\ell_1}$, $\hat{\theta}_2(\hat{x}) = \hat{x}(1 + q^{\frac{1}{2}} t\hat{y})^{-\ell_2}, \hat{\theta}_2(\hat{y}) = \hat{y}$. It is then natural to guess the existence of a $q$-deformation of the factorization $\eqref{IntroCommutator}$ for $[\hat{\theta}_1, \hat{\theta}_2]$, as well as of a $q$-analogue of \eqref{VtoTrop}. From the form of \eqref{VtoTrop} one may envisage the existence of $q$-deformed tropical counts $\widehat{N}^{\rm trop}_{(a, b)}({\bf w})$, which should be defined as $ \#^{\mu_q} \mathcal{S}({\bf w})$ for some $q$-deformation of the usual tropical multiplicity. We will see in section \ref{scattering} that this is precisely what happens: $\mu_q$ turns out to be the Block-G\"ottsche multiplicity. Another advantage of this point of view is that \eqref{TropToGW} immediately suggests the form of some putative $q$-deformed Gromov-Witten invariants. We will discuss this in section \ref{mpsSection}.
 
\section{$q$-deformation}\label{quantum}

We will need a more general incarnation of the group $\mathbb{V}$. Let $R$ be a commutative $\C$-algebra which is either complete local or Artinian. Let $\Gamma$ be a fixed lattice with an antisymmetric, bilinear form $\bra -, - \ket$. Consider the infinite dimensional complex Lie algebra $\g$ generated by $e_{\alpha}, \alpha \in \Gamma$, with bracket 
\begin{equation}\label{classBracket}
[e_{\alpha}, e_{\beta}] = \bra \alpha, \beta\ket e_{\alpha + \beta}.
\end{equation}
We also endow $\g$ with the associative, commutative product determined by 
\begin{equation}\label{classProduct}
e_{\alpha} e_{\beta} = e_{\alpha + \beta}.
\end{equation}
With the product \eqref{classProduct} and the bracket \eqref{classBracket} $\g$ becomes a Poisson algebra: the linear map $[x, -]$ satisfies the Leibniz rule (this is a straighforward check on the generators). We write $\widehat{\g}$ for the completed tensor product of $\g$ with $R$, $\widehat{\g} = \g\,\widehat{\otimes}_{\C} R = \lim_{\to} \g \otimes_{\C} R/{\mathfrak{m}^k_R}$, and extend the Poisson structure to $\widehat{\g}$ by $R$-linearity. We denote by $\mathfrak{m}_R[e_{\alpha}]$ the closure of the subalgebra generated by $\mathfrak{m}_R$ and $e_{\alpha}$, and we write $\mathfrak{m}_R[e_{\alpha}] e_{\alpha}$ for the subset of elements of the form $\xi e_{\alpha}$ with $\xi \in \mathfrak{m}_R[e_{\alpha}]$. Let $f_{\alpha} \in \widehat{\g}$ be an element of the form
\begin{equation}\label{fFunctions}
f_{\alpha} \in 1 +  \mathfrak{m}_R[e_{\alpha}] e_{\alpha},
\end{equation}
Let us introduce a class of Poisson automorphisms $\theta_{\alpha, f_{\alpha}}$ of the $R$-algebra $\widehat{\g}$ by prescribing
\begin{equation}\label{classActionEq}
\theta_{\alpha, f_{\alpha}}(e_{\beta}) = e_{\beta} f_{\alpha}^{\bra \alpha, \beta \ket}.
\end{equation}   
Notice that the inverse automorphism $\theta^{-1}_{\alpha, f_{\alpha}}$ is given by $\theta_{\alpha, f^{-1}_{\alpha}}$. More generally for $\Omega \in \Q$ we denote by $\theta^{\Omega}_{\alpha, f_{\alpha}}$ the automorphism $\theta_{\alpha, f^{\Omega}_{\alpha}}$. 
\begin{defn} The \emph{tropical vertex group} $\mathbb{V}_{\Gamma, R}$ is the completion with respect to $\mathfrak{m}_R\subset R$ of the subgroup of $\Aut_{R}(\widehat{\g})$ generated by all the transformations $\theta_{\alpha, f_{\alpha}}$ (as $\alpha$ varies in $\Gamma$ and $f_{\alpha}$ among functions of the form \eqref{fFunctions}).
\end{defn}
\begin{lem}\label{iso} Suppose that $\Gamma$ is the lattice $\Z^2$ endowed with its standard antisymmetric form $\bra (p, q), (p', q')\ket = p q' - q p'$. Then $\mathbb{V}_{\Gamma, \C[[t]]} \cong \mathbb{V}$. 
\end{lem}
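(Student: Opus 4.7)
The plan is to build an explicit isomorphism of complete topological $\C[[t]]$-algebras $\hat\varphi\colon \widehat{\g} \to \C[x, x^{-1}, y, y^{-1}][[t]]$ on the generators $e_{(a,b)} \mapsto x^a y^b$, and then show that it conjugates the generating set of $\mathbb{V}_{\Gamma, \C[[t]]}$ bijectively onto that of $\mathbb{V}$ as defined in the introduction. Since both groups are by construction the $t$-adic completion of the subgroup generated inside the respective ambient automorphism group, the isomorphism of groups then follows formally from the identification of generators.

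First I would verify that $\varphi(e_{(a,b)}) = x^a y^b$ defines a $\C$-linear isomorphism $\g \to \C[x^{\pm 1}, y^{\pm 1}]$ (since $\g$ is by definition the free $\C$-module on $\Gamma = \Z^2$), that it respects the commutative product \eqref{classProduct} on monomials, and that the Lie bracket \eqref{classBracket} transports to $[x^a y^b, x^c y^d] = (ad-bc)\,x^{a+c} y^{b+d}$, i.e.\ the Poisson bracket associated with $\tfrac{dx}{x}\wed\tfrac{dy}{y}$ up to the sign convention in the symplectic form. Extending by $\C[[t]]$-linearity and $t$-adic continuity produces the desired $\hat\varphi$.

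Next I would match the admissibility classes and the automorphisms. For the functions: an $f_{(a,b)} \in 1 + \m_R[e_{(a,b)}]\,e_{(a,b)}$, read inside the $t$-adic completion $\widehat{\g}$, has the form $1 + \sum_{n \geq 1} t^n\,x^a y^b\,h_n(x^a y^b)$ for polynomials $h_n \in \C[z]$, and setting $g(z, t) = \sum_{n \geq 1} t^{n-1} h_n(z) \in \C[z][[t]]$ reproduces exactly the shape \eqref{admissibleFunct}; the correspondence is manifestly bijective. For the automorphisms: applying \eqref{classActionEq} with $\beta = (1,0)$ and $\beta = (0,1)$ gives $\theta_{(a,b), f_{(a,b)}}(x) = x\,\hat\varphi(f_{(a,b)})^{-b}$ and $\theta_{(a,b), f_{(a,b)}}(y) = y\,\hat\varphi(f_{(a,b)})^{a}$, agreeing with the definition from the introduction on the $\C[[t]]$-algebra generators $x$ and $y$, and therefore (since both sides are $\C[[t]]$-algebra automorphisms) on all of $\C[x^{\pm 1}, y^{\pm 1}][[t]]$.

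The only step demanding real attention is the matching of the admissibility conditions \eqref{fFunctions} and \eqref{admissibleFunct}, which requires interpreting the expression $\m_R[e_\alpha]\,e_\alpha$ correctly inside the $t$-adic completion of $\g \otimes_\C R$; once this is in place the remaining verification is a routine check on generators, and conjugation by $\hat\varphi$ identifies the generated subgroups and their $t$-adic completions, yielding $\mathbb{V}_{\Gamma, \C[[t]]} \cong \mathbb{V}$.
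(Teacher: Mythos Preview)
Your proposal is correct and follows essentially the same route as the paper: you build the explicit algebra isomorphism $e_{(a,b)}\leftrightarrow x^a y^b$ (the paper's $\iota$ goes in the opposite direction but is the same map), match the admissible functions \eqref{fFunctions} with \eqref{admissibleFunct}, and verify that conjugation carries the generators $\theta_{\alpha,f_\alpha}$ to $\theta_{(a,b),f_{(a,b)}}$ by checking the action on $x$ and $y$. The paper's proof is terser but the content is identical.
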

\begin{proof} We can identify $\C[x, x^{-1}, y, y^{-1}]$ with $\g$ by the isomorphism $\iota$ defined by $\iota(x) = e_{(1, 0)}$, $\iota(y) = e_{(0, 1)}$. Following the general notation introduced in \eqref{fFunctions}, we let $\mathfrak{m}_{\C[[t]]}[x^a y^b]$ denote the set of formal power series in $t$, divisible by $t$, whose coefficients are polynomials in $x^a y^b$. Similarly $x^a y^b \mathfrak{m}_{\C[[t]]}[x^a y^b]$ denotes the subset of $\mathfrak{m}_{\C[[t]]}[x^a y^b]$ given by formal power series whose coefficients are divisible by $x^a y^b$.  Taking $\widehat{\otimes} \C[[t]]$ on both sides, $f_{(a, b)} \in 1 + x^a y^b \mathfrak{m}_{\C[[t]]}[x^a y^b]$ is mapped to some $f_{\alpha} \in 1 +  \mathfrak{m}_{\C[[t]]}[e_{\alpha}] e_{\alpha}$ where $\alpha = (a, b)$, and by \eqref{classActionEq} we have $\iota^{-1} \circ \theta_{\alpha, f_{\alpha}} \circ \iota = \theta_{(a, b), f_{a, b}}$. This proves the claim since $\theta_{(a, b), f_{(a, b)}}$, $\theta_{\alpha, f_{\alpha}}$ are topological generators.
\end{proof}
\noindent Elements of $\V_{\Gamma, R}$ of the form $\theta_{\alpha, 1 + \sigma e_{\alpha}}$ with $\sigma \in \m_R$ play a special role, as they have a natural interpretation in Donaldson-Thomas theory and in the mathematical physics of BPS states. For example, in the latter context, it has been argued in \cite{gmn} that the quantum-corrected Lagrangian of a class of supersymmetric gauge theories in four dimensions can be determined exactly (in the low-energy limit) as a function of the spectrum of BPS particles through a suitable Riemann-Hilbert factorization problem, whose jump factors are products of the group elements $\theta_{\alpha, 1 + \sigma e_{\alpha}}$ (one for each BPS particle of charge $\alpha$, appearing as many times as its multiplicity or ``index"). In particular as we will see the group elements $\theta_{\alpha, 1 + \sigma e_{\alpha}}$ have a well-defined $q$-deformation (which is also predicted by the physics of BPS states, in terms of ``refined indices" for BPS particles; see e.g. \cite{tudor} for more on this perspective). Accordingly we can give a definition of the subgroup of $\V_{\Gamma, R}$ which is relevant to the wall-crossing of Donaldson-Thomas invariants.
\begin{defn} The \emph{wall-crossing group} $\widetilde{\V}_{\Gamma, R} \subset \V_{\Gamma, R}$ is the completion of the subgroup generated by automorphisms of the form $\theta^{\Omega}_{\alpha, 1 + \sigma e_{\alpha}}$ for $\alpha \in \Gamma$, $\sigma \in \m_R$ and $\Omega\in\Q$. 
\end{defn} 
\noindent An argument in \cite{gps} Section 1 implies that we do not lose too much by restricting to groups of the form $\widetilde{\V}_{\Gamma, S}$, provided we work over a suitably large ring $S$. The argument bears on the case when $R = \C[[t_1, \ldots, t_n]]$ and $f_{\alpha}$ is of the form $f_{\alpha} = 1 + t_i e_{\alpha} g(e_{\alpha}, t_i)$ for some $i = 1, \ldots, n$ and $g \in \C[z][[t_i]]$. First we work modulo $\mathfrak{m}^{k+1}_R$ for some $k$: this will be the order of approximation. Then over $R_k = R/(t^{k+1}_1, \ldots, t^{k+1}_n)$ we have
\begin{equation}\label{logf}
\log f_{\alpha} = \sum^k_{j = 1} \sum_{w\geq 1} w a_{i j w} e_{w\alpha} t^k_i 
\end{equation}
for some coefficients $a_{i j w}$ which vanish for all but finitely many $w$. We introduce variables $u_{i j}$ for $i = 1, \ldots, n$, $j = 1, \ldots, k$, and pass to the base ring $\widetilde{R}_k = \C[u_{i j}]/(u^2_{ij})$. There is an inclusion $i\!: R_k \hookrightarrow \widetilde{R}_k$ given by $t_i \mapsto \sum^k_{j = 1} u_{i j}$. In particular we have an inclusion of groups $\mathbb{V}_{\Gamma, R_k} \hookrightarrow \mathbb{V}_{\Gamma, \widetilde{R}_k}$. We will often be sloppy and identify an element of $R_k$ with its image under $i$. As $u^2_{ij} = 0$, the image of \eqref{logf} under $i$ is 
\begin{equation*}
\log f_{\alpha} = \sum^k_{j = 1}\sum_{\#J = j} \sum_{w\geq 1} j! w a_{i j w} \prod_{l\in J} u_{i l} e_{w\alpha}, 
\end{equation*}
summing over $J \subset \{1, \ldots, n\}$. Exponentiating both sides and using again $u^2_{ij} = 0$ we find the factorization 
\begin{equation*}
f_{\alpha} = \prod^k_{j = 1}\prod_{\#J = j} \prod_{w\geq 1} \left(1 + j! w a_{i j w} \prod_{l\in J} u_{i l} e_{w\alpha}\right).
\end{equation*}
So for $f_{\alpha}$ as above we have found a factorization in $\mathbb{V}_{\Gamma, \widetilde{R}_k}$  
\begin{equation}\label{factorization}
\theta_{\alpha, f_{\alpha}} \equiv \prod^k_{j = 1}\prod_{\#J = j} \prod_{w\geq 1} \theta_{\alpha, f_{j J w}} \operatorname{mod} t^{k+1}_i,\quad f_{j J w} = 1 + j! w a_{i j w} \prod_{l\in J} u_{i l} e_{w\alpha}.
\end{equation}
Notice that we have 
\begin{equation*}
\theta_{\alpha, f_{j J w}} = \theta_{w\alpha, 1 + j! a_{i j w} \prod_{l\in J} u_{i l} e_{w\alpha}}.
\end{equation*}
 
Following \cite{ks}, we can take advantage of the Poisson structure on $\widehat{\g}$ to give a different expression for the special transformations $\theta_{\alpha, 1 + \sigma e_{m\alpha}}$, which leads easily to their $q$-deformation. Fix $\sigma \in \mathfrak{m}_R$, and define the \emph{dilogarithm} $\li_2(\sigma e_{\alpha})$ by
\begin{equation*}
\li_2(\sigma e_{\alpha}) = \sum_{k \geq 1} \frac{\sigma^k e_{k\alpha}}{k^2}.
\end{equation*} 
This is well defined by our assumptions on $R$. Then $\ad(\li_2(\sigma e_{\alpha})) = [\li_2(\sigma e_{\alpha}), - ]$ is a derivation of $\widehat{\g}$, and again by our assumptions on $R$ its exponential is a well defined Poisson automorphism of $\widehat{\g}$, acting by
\begin{equation*}
\exp(\ad(\li_2(\sigma e_{\alpha})))(e_{\beta}) = \sum_{h\geq 0} \frac{1}{h!}\ad^h(\li_2(\sigma e_{\alpha}))(e_{\beta}). 
\end{equation*}
\begin{lem}\label{classAction} The automorphism $\theta_{\alpha, 1 + \sigma e_{m\alpha}}$ equals $\exp(\frac{1}{m}\ad(\li_2(-\sigma e_{m\alpha})))$. 
\end{lem}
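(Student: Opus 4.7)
The plan is to verify the identity by evaluating both sides on the topological generators $e_\beta$ of $\widehat{\g}$. Both operators are Poisson automorphisms of $\widehat{\g}$: the left-hand side by definition of $\V_{\Gamma,R}$, and the right-hand side because $\ad$ of any element is a derivation of both the product \eqref{classProduct} and the bracket \eqref{classBracket} (so the exponential preserves both structures), with convergence ensured by $\sigma\in\m_R$ together with $\m_R$-adic completeness. Since $\widehat{\g}$ is topologically generated by the $e_\beta$, matching the actions on these generators is sufficient.

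Set $D := \tfrac{1}{m}\ad\bigl(\li_2(-\sigma e_{m\alpha})\bigr)$. Expanding the dilogarithm using $e_{m\alpha}^k = e_{km\alpha}$ from \eqref{classProduct}, applying \eqref{classBracket}, and invoking the cancellation $\langle km\alpha,\beta\rangle = km\langle\alpha,\beta\rangle$ to reduce $\tfrac{km}{m\,k^2}=\tfrac{1}{k}$, then regrouping through \eqref{classProduct} as $e_{km\alpha+\beta} = e_{km\alpha}\cdot e_\beta$, one finds
\[
D(e_\beta) \;=\; \langle\alpha,\beta\rangle\,\Bigl(\sum_{k\geq 1}\tfrac{(-\sigma e_{m\alpha})^k}{k}\Bigr)\,e_\beta \;=\; \pm\langle\alpha,\beta\rangle\,\log(1+\sigma e_{m\alpha})\,e_\beta,
\]
where the overall sign is fixed by the usual conventions linking $\li_2(-z)$ and $\log(1+z)$.

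The decisive structural observation is that $\log(1+\sigma e_{m\alpha})$ lies in the commutative Poisson subalgebra $\widehat{\g}_{m\alpha}\subset\widehat{\g}$ generated by $\{e_{jm\alpha}\}_{j\geq 1}$, whose Lie bracket vanishes identically because $\langle jm\alpha, km\alpha\rangle = 0$ by antisymmetry. Consequently $D$ annihilates $\log(1+\sigma e_{m\alpha})$, and since $D$ is a derivation of \eqref{classProduct}, an immediate induction gives
\[
D^n(e_\beta) \;=\; \bigl(\pm\langle\alpha,\beta\rangle\,\log(1+\sigma e_{m\alpha})\bigr)^n\, e_\beta \qquad (n\geq 0).
\]

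Summing the exponential series term-by-term and applying the formal identity $\exp(N\log(1+z))=(1+z)^N$ in $R[[z]]$ yields $\exp(D)(e_\beta) = e_\beta\,(1+\sigma e_{m\alpha})^{\langle\alpha,\beta\rangle}$, which is exactly $\theta_{\alpha,1+\sigma e_{m\alpha}}(e_\beta)$ by \eqref{classActionEq}. There is no real conceptual obstacle: the entire argument rests on the commutativity of the $e_{jm\alpha}$, which collapses $\exp(D)$ to exponentiation of a formal scalar multiplier; the only care needed is in tracking signs through the chain $\li_2(-\cdot)\leadsto -\log(1-\cdot)\leadsto \log(1+\cdot)$ so as to match the exponent in \eqref{classActionEq}.
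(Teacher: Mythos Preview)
Your argument is essentially the same as the paper's: compute $\tfrac{1}{m}\ad(\li_2(-\sigma e_{m\alpha}))(e_\beta)$ directly, recognize the resulting series as $\langle\alpha,\beta\rangle\log(1+\sigma e_{m\alpha})\,e_\beta$, then iterate via the Leibniz rule to obtain $D^h(e_\beta)=\bigl(\langle\alpha,\beta\rangle\log(1+\sigma e_{m\alpha})\bigr)^h e_\beta$ and exponentiate. Your phrasing of the induction step --- that $\log(1+\sigma e_{m\alpha})$ lies in the abelian subalgebra generated by the $e_{jm\alpha}$ and is therefore killed by $D$ --- is a slightly more explicit justification than the paper's ``Leibniz rule and induction'', but the content is identical; the only loose end is that you leave the sign as $\pm$ and defer to ``usual conventions'', whereas a complete proof should track $\sum_{k\geq 1}(-\sigma e_{m\alpha})^k/k = -\log(1+\sigma e_{m\alpha})$ explicitly.
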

\begin{proof} We have
\begin{align*}
[\frac{1}{m}\li_2(-\sigma e_{m\alpha}), e_{\beta}] &= \frac{1}{m}\sum_{k \geq 1} (-1)^k\frac{\sigma^k}{k^2} [e_{k m \alpha}, e_{\beta}] = \frac{1}{m}\sum_{k \geq 1} (-1)^k\frac{\sigma^k}{k^2} \bra k m\alpha, \beta\ket e_{k m \alpha + \beta}\\
&= \sum_{k \geq 1} (-1)^k\frac{\sigma^k}{k} \bra\alpha, \beta\ket e_{\beta} e_{k m \alpha} = e_{\beta} \bra\alpha, \beta\ket \sum_{k \geq 1} (-1)^k\frac{\sigma^k e_{k m \alpha}}{k}\\
&= e_{\beta} \bra\alpha, \beta\ket \log(1 + \sigma e_{m \alpha}).
\end{align*}
Using the Leibniz rule and induction, we find
\begin{equation*}
\ad^h(\frac{1}{m}\li_2(-\sigma e_{m\alpha}))(e_{\beta}) = e_{\beta} (\bra\alpha, \beta\ket \log(1 + \sigma e_{m\alpha}))^h,
\end{equation*}
and the result follows.
\end{proof}
\noindent We now replace $\g$ with an associative, noncommutative algebra $\g_q$ over the ring 
\begin{equation*}
\C[q^{\pm \frac{1}{2}}, ((q^n - 1)^{-1})_{n \geq 1}],
\end{equation*} 
generated by symbols $\hat{e}_{\alpha}, \alpha \in \Gamma$. The classical product \eqref{classProduct} is quantized to
\begin{equation}\label{qProduct}
\hat{e}_{\alpha} \hat{e}_{\beta} = q^{\frac{1}{2}\bra\alpha, \beta\ket}\hat{e}_{\alpha + \beta}.
\end{equation}
As standard in the quantization the Lie bracket is the natural one given by the commutator. In other words we are now thinking of the $\hat{e}_{\alpha}$ as operators (as opposed to the classical bracket \eqref{classBracket}, which corresponds to a Poisson bracket of the $e_{\alpha}$ seen as functions). Namely, we set
\begin{equation}\label{qBracket}
[\hat{e}_{\alpha}, \hat{e}_{\beta}] := (q^{\frac{1}{2}\bra\alpha, \beta\ket} - q^{-\frac{1}{2}\bra\alpha, \beta\ket})\hat{e}_{\alpha + \beta}. 
\end{equation}
Since this is the commutator bracket of an associative algebra, $\g_q$ is automatically Poisson. 

Suppose that an element $\xi \in \g$ lies in the $\C$-subalgebra generated by $q^{\pm \frac{1}{2}}$ and $\hat{e}_{\alpha}, \alpha \in \Gamma$. Then we say that $\xi$ \emph{admits a quasi-classical limit}\footnote{This is the opposite of the quasi-classical limit considered in \cite{ks}. It gives the wrong sign for BPS states counts, but it is the natural choice for the purposes of this paper.}, which is obtained when we specialize $q^{\frac{1}{2}}$ to $1$. As an example, notice that after rescaling the Lie bracket \eqref{qBracket} admits the quasi-classical limit \eqref{classBracket}:
\begin{equation*}
\lim_{q^{\frac{1}{2}}\to 1} \frac{1}{q-1} [\hat{e}_{\alpha}, \hat{e}_{\beta}] = \bra \alpha, \beta\ket \hat{e}_{\alpha + \beta}.
\end{equation*}
\noindent Fixing a local complete or Artinian $\C$-algebra $R$ with maximal ideal $\mathfrak{m}_R$ as usual, we define $\widehat{\g}_q = \g_q\,\widehat{\otimes}_{\C} R$. The fundamental case is $\g_q[[t]]$ where $t$ is a central variable. Again, if $\xi \in \widehat{\g}_q$ lies in the closure of the $R$-subalgebra generated by $q^{\pm \frac{1}{2}}$ and $\hat{e}_{\alpha}, \alpha \in \Gamma$ (e.g. when $R = \C[[t]]$, $\xi$ is a formal power series in $t$ whose coefficients are Laurent polynomials in $q^{\frac{1}{2}}$ and the $\hat{e}_{\alpha}$), it admits a quasi-classical limit given by specializing $q^{\frac{1}{2}}$ to $1$.

By Lemma \ref{classAction} the action of $\theta_{\alpha, 1 + \sigma e_{\alpha}}$ on $\widehat{\g}$ equals the adjoint action of $\exp(\ad(\li_2(-\sigma e_{\alpha})))$. So we need to find an element of $\widehat{\g}_q$ which plays the role of the (exponential of the) dilogarithm. This is the \emph{$q$-dilogarithm},
\begin{equation*}
{\bf E}(\sigma\hat{e}_{\alpha}) = \sum_{n \geq 0} \frac{(-q^{\frac{1}{2}}\sigma \hat{e}_{\alpha})^n}{(1-q)(1-q^2)\cdots(1-q^n)}
\end{equation*}
(where $\sigma \in \mathfrak{m}_R$). This only involves commuting variables, and is well defined by our assumptions on $R$. The $q$-dilogarithm is in fact a $q$-deformation of $\exp(\ad(\li_2(\sigma e_{\alpha})))$, as shown by the standard rewriting
\begin{equation*}
{\bf E}(\sigma \hat{e}_{\alpha}) = \exp\left(-\sum_{k\geq 1}\frac{\sigma^k\hat{e}_{k\alpha}}{k((-q^{\frac{1}{2}})^k - (-q^{\frac{1}{2}})^{-k})}\right).
\end{equation*}
The $q$-dilogarithm also admits a well known infinite product expansion,
\begin{equation}\label{expansion}
{\bf E}(\sigma\hat{e}_{\alpha}) = \prod_{k \geq 0} \left(1 + q^{k + \frac{1}{2}}\sigma\hat{e}_{\alpha}\right)^{-1}.
\end{equation}
For $\Omega \in \Q$ we introduce automorphisms $\hat{\theta}^{\Omega}[\sigma\hat{e}_{\alpha}]$ of $\widehat{\g}_q$ acting by
\begin{equation*}
\hat{\theta}^{\Omega}[\sigma\hat{e}_{\alpha}](\hat{e}_{\beta}) = \Ad {\bf E}^{\Omega}(\sigma\hat{e}_{\alpha}) (\hat{e}_{\beta}) = {\bf E}^{\Omega}(\sigma\hat{e}_{\alpha}) \hat{e}_{\beta} {\bf E}^{-\Omega}(\sigma\hat{e}_{\alpha}). 
\end{equation*}
We regard $\hat{\theta}[\sigma\hat{e}_{\alpha}]$ as the required quantization of $\theta_{\alpha, 1 + \sigma e_{\alpha}}$. Notice the change of notation from the classical case. This is more practical especially since we will also need to consider the shifted operators $\hat{\theta}[\sigma(-q^{\frac{1}{2}})^n\hat{e}_{\alpha}] = \Ad {\bf E}(\sigma(-q^{\frac{1}{2}})^n\hat{e}_{\alpha})$ for $n \in \Z$. We can now prove the analogue of Lemma \ref{classAction}.  
\begin{lem}\label{qAction} The adjoint action is given by
\begin{equation}\label{qActionEq}
\hat{\theta}^{\Omega}[\sigma(-q^{\frac{1}{2}})^n\hat{e}_{\alpha}](\hat{e}_{\beta}) = \hat{e}_{\beta} \prod^{\bra \alpha, \beta \ket - 1}_{k = 0} \left(1 + (-1)^n q^{k + \frac{n+1}{2}} \sigma\hat{e}_{\alpha}\right)^{\Omega} \prod^{ - 1}_{k = \bra \alpha, \beta \ket} \left(1 + (-1)^n q^{k + \frac{n+1}{2}} \sigma\hat{e}_{\alpha}\right)^{-\Omega}.
\end{equation}
\end{lem}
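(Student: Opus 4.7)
The plan is to compute the adjoint action directly from the infinite product expansion \eqref{qProduct} of the $q$-dilogarithm, combined with a quantum commutation identity between $\hat{e}_\alpha$ and $\hat{e}_\beta$. This reduces everything to a formal manipulation of commuting formal power series in $\hat{e}_\alpha$.

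First, note that the quantum product \eqref{qProduct} implies the braiding relation
\begin{equation*}
\hat{e}_\alpha \hat{e}_\beta = q^{\bra\alpha,\beta\ket}\hat{e}_\beta \hat{e}_\alpha,
\end{equation*}
obtained by comparing $\hat{e}_\alpha\hat{e}_\beta = q^{\frac{1}{2}\bra\alpha,\beta\ket}\hat{e}_{\alpha+\beta}$ with $\hat{e}_\beta \hat{e}_\alpha = q^{-\frac{1}{2}\bra\alpha,\beta\ket}\hat{e}_{\alpha+\beta}$. By iterating, $\hat{e}_\alpha^m \hat{e}_\beta = \hat{e}_\beta (q^{\bra\alpha,\beta\ket}\hat{e}_\alpha)^m$ for all $m \geq 0$, and therefore for any scalar $\lambda$ and any exponent $\Omega\in\Q$ (interpreted via the binomial series, well defined because $\sigma\in\mathfrak{m}_R$)
\begin{equation*}
(1 + \lambda \hat{e}_\alpha)^{\Omega} \hat{e}_\beta = \hat{e}_\beta (1 + q^{\bra\alpha,\beta\ket}\lambda\hat{e}_\alpha)^{\Omega}.
\end{equation*}

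Next, using \eqref{qProduct} we rewrite
\begin{equation*}
{\bf E}^{\Omega}\!\left(\sigma(-q^{\frac{1}{2}})^n\hat{e}_\alpha\right) = \prod_{k \geq 0} \left(1 + (-1)^n q^{k + \frac{n+1}{2}}\sigma\hat{e}_\alpha\right)^{-\Omega},
\end{equation*}
and apply the braiding identity to each factor to move $\hat{e}_\beta$ through ${\bf E}^{\Omega}$ from left to right, picking up a shift $q^{\bra\alpha,\beta\ket}$ in each factor. Pairing the result with ${\bf E}^{-\Omega}$ on the right then gives
\begin{equation*}
\hat{\theta}^{\Omega}[\sigma(-q^{\frac{1}{2}})^n\hat{e}_\alpha](\hat{e}_\beta) = \hat{e}_\beta \prod_{k \geq 0}\left(1+(-1)^n q^{\bra\alpha,\beta\ket + k+\frac{n+1}{2}}\sigma\hat{e}_\alpha\right)^{-\Omega}\prod_{k\geq 0}\left(1+(-1)^n q^{k+\frac{n+1}{2}}\sigma\hat{e}_\alpha\right)^{\Omega}.
\end{equation*}
After re-indexing $k \mapsto k - \bra\alpha,\beta\ket$ in the first product, both products run over shifted half-integer exponents of $q$, and they nearly cancel term-by-term, leaving the finite product over $0 \leq k < \bra\alpha,\beta\ket$ (if $\bra\alpha,\beta\ket \geq 0$) or the reciprocal finite product over $\bra\alpha,\beta\ket \leq k < 0$ (if $\bra\alpha,\beta\ket < 0$). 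These two cases are packaged precisely by the two empty-or-nonempty finite products in \eqref{qActionEq}.

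The main subtlety is purely bookkeeping: making sense of the infinite products and of fractional powers in the completed algebra $\widehat{\g}_q$ (granted by $\sigma \in \mathfrak{m}_R$), and checking that the two finite products in the statement indeed encode both signs of $\bra\alpha,\beta\ket$ uniformly. Apart from this, no further input is needed beyond the product expansion \eqref{qProduct} and the braiding relation derived from \eqref{qProduct}.
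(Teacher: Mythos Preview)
Your proof is correct and follows essentially the same route as the paper: derive the braiding relation $\hat{e}_\alpha\hat{e}_\beta = q^{\langle\alpha,\beta\rangle}\hat{e}_\beta\hat{e}_\alpha$, push $\hat{e}_\beta$ through ${\bf E}^{\Omega}$, and telescope using the infinite product expansion \eqref{qProduct}. The only cosmetic difference is that the paper first commutes $\hat{e}_\beta$ past ${\bf E}^{\Omega}(\sigma\hat{e}_\alpha)$ as a single formal power series and then invokes the product expansion, whereas you expand first and commute factor by factor; the paper also treats $\langle\alpha,\beta\rangle < 0$ by appealing to the fact that $\hat{\theta}$ is an algebra automorphism rather than by direct re-indexing, but the content is the same.
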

\noindent It is straightforward to check that \eqref{qActionEq} admits the expected quasi-classical limit \eqref{classActionEq} as $q^{\frac{1}{2}}\to 1$.
\begin{proof} Suppose to start with that $n = 0$ and set $\kappa = \bra\alpha, \beta\ket$. Then $\hat{e}_{\alpha}\hat{e}_{\beta} = q^{\kappa} \hat{e}_{\beta}\hat{e}_{\alpha}$, and so if $f(\hat{e}_{\alpha})$ is a formal power series with coefficients in $\C[q^{\pm \frac{1}{2}}, ((q^n - 1)^{-1})_{n \geq 1}]$, we have 
\begin{equation*}
f(\hat{e}_{\alpha}) \hat{e}_{\beta} = \sum_{i \geq 0} a_i \hat{e}_{i\alpha}\hat{e}_{\beta} = \hat{e}_{\beta} \sum_{i \geq 0} a_i q^{i\kappa}\hat{e}_{i\alpha} = \hat{e}_{\beta} f(q^{\kappa} \hat{e}_{\alpha}).
\end{equation*}
Apply this to $f(\hat{e}_{\alpha}) = {\bf E}^{\Omega}(\sigma \hat{e}_{\alpha})$ to get
\begin{equation*}
\hat{\theta}^{\Omega}[\sigma \hat{e}_{\alpha}](e_{\beta}) = {\bf E}^{\Omega}(\sigma\hat{e}_{\alpha}) \hat{e}_{\beta} {\bf E}^{-\Omega}(\sigma\hat{e}_{\alpha}) = \hat{e}_{\beta}  {\bf E}^{\Omega}(q^{\kappa}\sigma\hat{e}_{\alpha}) {\bf E}^{-\Omega}(\sigma\hat{e}_{\alpha}). 
\end{equation*}
Suppose for a moment that $\kappa \geq 0$ and use the product expansion \eqref{expansion} to get 
\begin{align*}
{\bf E}^{\Omega}(q^{\kappa}\sigma\hat{e}_{\alpha}) &= \prod_{i \geq 0} \left(1 + q^{i + \frac{1}{2}} q^{\kappa} \sigma\hat{e}_{\alpha}\right)^{-\Omega} = \prod_{i \geq \kappa} \left(1 + q^{i + \frac{1}{2}}\sigma\hat{e}_{\alpha}\right)^{-\Omega}\\
&= \prod^{\kappa-1}_{i = 0} \left(1 + q^{i + \frac{1}{2}}\hat{e}_{\alpha}\right)^{\Omega}{\bf E}^{\Omega}(\sigma\hat{e}_{\alpha}),
\end{align*}
and \eqref{qActionEq} follows. For $n \neq 0$ we use similarly
\begin{align*}
{\bf E}(q^{\kappa}(-q^{\frac{1}{2}})^n\hat{e}_{\alpha}) &= \prod_{i \geq 0} \left(1 + (-1)^n q^{i + \frac{n+1}{2}} q^{\kappa} \hat{e}_{\alpha}\right)^{-\Omega} = \prod_{i \geq \kappa} \left(1 + (-1)^n q^{i + \frac{n+1}{2}} \hat{e}_{\alpha}\right)^{-\Omega}\\
&= \prod^{\kappa - 1}_{i = 0} \left(1 + (-1)^n q^{i + \frac{n+1}{2}} \hat{e}_{\alpha}\right)^{\Omega}{\bf E}^{\Omega}((-q^{\frac{1}{2}})^n\hat{e}_{\alpha}).
\end{align*}
The result for $\kappa < 0$ also follows since $\hat{\theta}[\sigma\hat{e}_{\alpha}]$ is an algebra automorphism.
\end{proof}
\noindent We can now introduce the $q$-deformed analogue of the group $\widetilde{\V}_R$. 
\begin{defn} $\mathbb{U}_{\Gamma, R}$ is the completion of the subgroup of $\Aut_{\C(q^{\pm\frac{1}{2}})\otimes_{\C} R} \widehat{\g}_q$ generated by automorphisms of the form $\hat{\theta}^{\Omega}[(-q^{\frac{1}{2}})^n \sigma\hat{e}_{\alpha})]$ (where $\alpha\in\Gamma$, $\sigma\in\m_R$, $\Omega \in Q$, $n \in \Z$), with respect to the $\mathfrak{m}_R$-adic topology. 
\end{defn}
\noindent From now on we assume that $\Gamma$ is $\Z^2$ with its standard antisymmetric bilinear form (as in Lemma \ref{iso}). The factorization \eqref{IntroCommutator} has an analogue in the $q$-deformed case. Suppose that $\alpha_1$ follows $\alpha_2$ in clockwise order.
\begin{lem} Fix positive integers $\ell_1, \ell_2$. Then there exist unique $\Omega_n(k \alpha) \in \mathbb{Q}$ such that
\begin{equation}\label{qCommutator}
[\hat{\theta}^{\ell_1}[\sigma_1\hat{e}_{\alpha_1}], \hat{\theta}^{\ell_2}[\sigma_2\hat{e}_{\alpha_2}]] = \prod^{\to}_{\gamma} \prod_{k \geq 1} \prod_{n \in \Z} \hat{\theta}^{(-1)^n \Omega_n(k\gamma)}[(-q^{\frac{1}{2}})^n \sigma^{k\gamma}\hat{e}_{k\gamma}],
\end{equation}
where $\prod^{\to}$ is a slope ordered product over \emph{primitive, positive} vectors $\gamma = \gamma^1\alpha_1 + \gamma^2\alpha_2$, we set $\sigma^{k\gamma} = \sigma^{k\gamma_1}_1\sigma^{k\gamma_2}_2$ and, for each fixed $k$, $\Omega_n(k\gamma)$ vanishes for all but finitely many $n$.
\end{lem}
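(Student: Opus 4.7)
The plan is to imitate the strategy used for the classical factorization~\eqref{IntroCommutator}: work in the pro-nilpotent group $\U_{\Gamma, R}$ with $\Gamma=\Z\alpha_1\oplus\Z\alpha_2$ and $R=\C[[\sigma_1,\sigma_2]]$, isolate a direction-component $\hat\phi_\gamma$ for each primitive positive $\gamma$ by a slope-ordering argument, and then analyse each $\hat\phi_\gamma$ separately, exploiting the fact that $\{\hat{e}_{m\gamma}\}_{m\geq 1}$ pairwise commute for fixed $\gamma$ since $\bra m\gamma,m'\gamma\ket=0$.

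First, the explicit expression
\[
\log{\bf E}^{\ell_i}(\sigma_i\hat{e}_{\alpha_i})=-\ell_i\sum_{j\geq 1}\frac{\sigma_i^j\,\hat{e}_{j\alpha_i}}{j\bigl((-q^{1/2})^j-(-q^{1/2})^{-j}\bigr)}
\]
together with Baker--Campbell--Hausdorff shows that the commutator $C$ of the two generators is of the form $\exp(\ad X)$ with $X$ supported on the open positive cone $\Q_{>0}\alpha_1+\Q_{>0}\alpha_2$, because iterated brackets $[\hat{e}_{k_1\alpha_1},\hat{e}_{k_2\alpha_2}]$ only produce such weights. An order-by-order slope-ordering argument as in \cite{gps}~\S1, adapted to the $\m_R$-adic filtration of $\U_{\Gamma, R}$, then yields a unique factorization $C=\prod^{\to}_\gamma\hat\phi_\gamma$ over primitive positive $\gamma$, with each $\hat\phi_\gamma=\exp\ad L_\gamma$ and
\[
L_\gamma=\sum_{m\geq 1}\tilde\lambda_m(\gamma)\,\sigma^{m\gamma}\,\hat{e}_{m\gamma},\qquad \tilde\lambda_m(\gamma)\in\C(q^{\pm 1/2}).
\]
Homogeneity forces the single monomial $\sigma^{m\gamma}$ in each term, since only iterated brackets with exactly $ma$ copies of $\hat{e}_{\alpha_1}$ and $mb$ copies of $\hat{e}_{\alpha_2}$ contribute to the $\hat{e}_{m\gamma}$-component when $\gamma=a\alpha_1+b\alpha_2$.

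Within each direction $\gamma$, commutativity of the $\hat{e}_{k\gamma}$ allows the quantum dilogarithms on the right-hand side of \eqref{qCommutator} to be multiplied freely. Computing the log of the product and matching coefficients of $\sigma^{m\gamma}\hat{e}_{m\gamma}$ with $L_\gamma$ gives
\[
\tilde\lambda_m(\gamma)=-\sum_{k\mid m}\frac{k}{m}\cdot\frac{\sum_n(-1)^{n(1+m/k)}(q^{1/2})^{nm/k}\,\Omega_n(k\gamma)}{(-q^{1/2})^{m/k}-(-q^{1/2})^{-m/k}},
\]
a system triangular in the divisors $k\mid m$. Isolating the diagonal term $k=m$ (so $m/k=1$) yields
\[
\sum_n q^{n/2}\,\Omega_n(m\gamma)=(q^{1/2}-q^{-1/2})\bigl(\tilde\lambda_m(\gamma)-(\text{contributions from }k<m)\bigr),
\]
which determines the $\Omega_n(m\gamma)\in\Q$ uniquely as the coefficients of a Laurent polynomial in $q^{1/2}$, provided the right-hand side is such a polynomial with rational coefficients.

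The main obstacle is precisely this integrality property: a priori $\tilde\lambda_m(\gamma)$ and the recursively computed lower-order terms lie in $\C(q^{\pm 1/2})$ and could carry poles at roots of unity coming from the denominators $(-q^{1/2})^{j}-(-q^{1/2})^{-j}$. One approach is a direct induction on $m$, verifying pole cancellations through the BCH expansion of $C$; a cleaner alternative is to appeal to the motivic refined wall-crossing formula of Kontsevich--Soibelman~\cite{ks}, whose atomic building blocks are precisely the operators $\Ad{\bf E}^{\Omega}((-q^{1/2})^n\hat{e}_{k\gamma})$ with $\Omega\in\Q$, and which simultaneously guarantees existence of \eqref{qCommutator} and rationality of the $\Omega_n(k\gamma)$. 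Finiteness of the support of $n\mapsto\Omega_n(k\gamma)$ for each fixed $k$ then follows because at each order in $\m_R$ only finitely many brackets contribute to $L_\gamma$, bounding the degree of the resulting Laurent polynomial in $q^{1/2}$.
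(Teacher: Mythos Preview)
Your proposal follows the same strategy as the paper's (one-sentence) proof: Baker--Campbell--Hausdorff plus induction on $\gamma^1+\gamma^2$, with the paper deferring details to \cite{pioline} \S2.1. You have simply made the mechanism explicit---the slope-ordered factorization $C=\prod_\gamma^{\to}\hat\phi_\gamma$, and within each ray the triangular system in the divisors $k\mid m$ that determines the $\Omega_n(m\gamma)$---and you correctly isolate the one point that is not purely formal, namely that $(q^{1/2}-q^{-1/2})\bigl(\tilde\lambda_m(\gamma)-\text{(lower)}\bigr)$ lands in $\Q[q^{\pm 1/2}]$ rather than merely in $\Q(q^{\pm 1/2})$; the paper does not flag this separately but absorbs it into the citation, and your two suggested resolutions (direct pole-cancellation in the BCH expansion, or appeal to the refined wall-crossing of \cite{ks}) are exactly the standard ones.
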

\begin{proof} Use the Baker-Campbell-Hausdorff formula and induction on $\gamma^1 + \gamma^2$ (see e.g. \cite{pioline} section 2.1).
\end{proof}
\noindent Our main problem then becomes to find $\hat{\theta}_{\gamma} = \prod_{k \geq 1} \prod_{n \in \Z} \hat{\theta}^{(-1)^n \Omega_n(k\gamma)}[(-q^{\frac{1}{2}})^n \sigma^{k\gamma}\hat{e}_{k\gamma}]$. To compare this with \eqref{VtoTrop} write $\hat{x} = \hat{e}_{1, 0}$, $\hat{y} = \hat{e}_{0,1}$ (so $\hat{x} \hat{y} = q \hat{y}\hat{x}$) and introduce a ``Poincar\'e" Laurent polynomial in $q^{\frac{1}{2}}$, $P(k \gamma)(q) = \sum_{n \in \Z} (-1)^n\Omega_n(k \gamma)(-q^{\frac{1}{2}})^n$. Then the action of 
$$\prod_{k \geq 1} \prod_{n \in \Z} \hat{\theta}^{(-1)^n \Omega_n(k\gamma)}[(-q^{\frac{1}{2}})^n \sigma^{k\gamma}\hat{e}_{k\gamma}]$$ 
can be written as $\hat{x} \mapsto \hat{x} f, \hat{y} \mapsto \hat{y} g$, where $f, g$ are \emph{commutative} power series given by
\begin{align*}
\log f &= \sum_{m\geq 1} q^{-\frac{m^2}{2}\gamma^1\gamma^2} (\sigma_1\hat{x})^{m \gamma^1} (\sigma_2\hat{y})^{m \gamma^2} \left( - \sum_{k | m} \sum^{-1}_{s = - k \gamma^2} q^{\frac{m}{k}s} \right) \hat{c}^{\gamma}_{m},\\
\log g &=\sum_{m\geq 1} q^{-\frac{m^2}{2}\gamma^1\gamma^2} (\sigma_1\hat{x})^{m \gamma^1} (\sigma_2\hat{y})^{m \gamma^2} \left(\sum_{k | m} \sum^{k \gamma^1 - 1}_{s = 0} q^{\frac{m}{k}s} \right) \hat{c}^{\gamma}_{m}
\end{align*}
for coefficients $\hat{c}^{\gamma}_m = \sum_{k | m} \frac{(-q^{\frac{1}{2}})^{\frac{m}{k}}}{\frac{m}{k}} P(k \gamma)(q^{\frac{m}{k}}) \in \Q[q^{\pm\frac{1}{2}}]$. 

\section{Scattering diagrams}\label{scattering}

We adapt the results of \cite{gps} sections 1 and 2 to the $q$-deformed setup. In this section we take $\Gamma$ to be $\Z^2$ with its standard antisymmetric bilinear form (as in Lemma \ref{iso}). 
\begin{defn} A \emph{ray} or \emph{line} for $\mathbb{U}_{\Gamma, R}$ is a pair $(\mathfrak{d}, \hat{\theta}_{\mathfrak{d}})$ where
\begin{enumerate}
\item[$\bullet$] $\mathfrak{d} \subset \Gamma\otimes\R = \R^2$ is a subset which is either of the form $\alpha'_0 + \R_{\geq 0}\alpha_0$ (a ray), or $\alpha'_0 + \R\alpha_0$ (a line), with $\alpha'_0 \in \R^2$, and $\alpha_0\in\Gamma$ positive;
\item[$\bullet$] $\hat{\theta}_{\mathfrak{d}} \in \mathbb{U}_{\Gamma, R}$ is a (possibly infinite) product of elements of the form $\hat{\theta}^{\Omega}[(-q^{\frac{1}{2}})^n\sigma\hat{e}_{k\alpha_0}]$. 
\end{enumerate}
\end{defn}
\noindent If $\d$ is a ray we write $\del \d$ for its initial point, and set $\del \d = \emptyset$ if $\d$ is a line.
\begin{defn} A \emph{scattering diagram} for $\U_{\Gamma, R}$ is a collection of rays and lines $(\mathfrak{d}, \hat{\theta}_{\mathfrak{d}})$ such that for every $k \geq 1$ we have $\hat{\theta}_{\mathfrak{d}} \equiv \operatorname{Id} \operatorname{mod} \mathfrak{m}^k_R$ for all but finitely $\mathfrak{d}$.  
\end{defn}
\noindent The \emph{singular set} of a scattering diagram $\D$ is $\operatorname{Sing}(\D) = \bigcup_{\d \in \D}\del\d\,\cup\,\bigcup_{\dim \d_1 \cap \d_2 = 0} \d_1\cap\d_2$. Let $\pi\!: [0, 1] \to \R^2$ be a smooth path. We say that $\pi$ is \emph{admissible} if $\pi$ misses the singular set $\operatorname{Sing}(\D)$ and is transversal to every $\d \in \D$. We will define a notion of path ordered product $\hat{\theta}_{\pi, \D}$ along $\pi$. Let $k \geq 1$. Then $\pi$ meets (transversely) only finitely many $\d$ with $\hat{\theta}_{\d} \not\equiv \operatorname{Id} \operatorname{mod} \mathfrak{m}^k_R$. We denote this ordered collection by $\d_1, \ldots, \d_s$, and define a partial ordered product $\hat{\theta}^{(k)}_{\pi, \D} = \hat{\theta}^{\eps_1}_{\d_1} \circ \cdots \circ \hat{\theta}^{\eps_s}_{\d_s}$. Here $\eps_i = 1$ if $\{\pi', \alpha_0\}$ is a positive basis of $\R^2$ (where $\alpha_0$ is the direction of $\d_i$), and $\eps_i = -1$ otherwise. Notice that the only ambiguity in $\hat{\theta}^{(k)}_{\pi, \D}$ happens when $\dim \d_i \cap \d_{i+1} = 1$. But then $\hat{\theta}_{\d_i}$ and $\hat{\theta}_{\d_{i+1}}$ commute, so in fact $\hat{\theta}^{(k)}_{\pi, \D}$ is well defined. We then let $\hat{\theta}_{\pi, \D} = \lim_{\to, k} \hat{\theta}^{(k)}_{\pi, \D}$, a well defined element of $\U_{\Gamma, R}$.  We say that a scattering diagram $\D$ is \emph{saturated} if $\hat{\theta}_{\pi, \D} = \operatorname{Id}$ for all admissible, closed paths $\pi$. Two scattering diagrams $\D$, $\D'$ are \emph{equivalent} if $\hat{\theta}_{\pi, \D} = \hat{\theta}_{\pi, \D'}$ whenever $\pi$ is admissible for both. A simple induction argument (adapted e.g. from the proof of \cite{gps} Theorem 1.4) shows that a scattering diagram $\D$ admits a \emph{saturation}: a saturated scattering diagram $\operatorname{S}(\D)$ which is obtained by adding to $\D$ a collection of rays. Moreover $\operatorname{S}(\D)$ is unique up to equivalence. 

Suppose that  the $\alpha_i$ are positive and that $\alpha_1$ follows $\alpha_2$ in clockwise order. Then computing the operators $\hat{\theta}_{\gamma}$ for $[\hat{\theta}^{\ell_1}[\sigma_1\hat{e}_{\alpha_1}], \hat{\theta}^{\ell_2}[\sigma_2\hat{e}_{\alpha_2}]]$ is equivalent to determining $\S(\D)$ for the scattering diagram $\D = \{(\R\alpha_1, \hat{\theta}^{\ell_1}[\sigma_1\hat{e}_{\alpha_1}]), (\R\alpha_2, \hat{\theta}^{\ell_2}[\sigma_2\hat{e}_{\alpha_2}])\}$. To see this choose $\pi$ to be a closed loop with $\pi(0) = (-1, -1)$ and winding once around the origin in clockwise direction. In general computing $\S(\D)$ can be very hard. However there is a special case when saturation is straightforward. 

For $m \in \Z$ we set $[m]_q = \frac{q^{\frac{m}{2}} - q^{-\frac{m}{2}}}{q^{\frac{1}{2}} - q^{-\frac{1}{2}}}$, the usual $q$-number.
\begin{lem}\label{commutator} Suppose $\mathfrak{m}_R$ contains elements $\sigma_1, \sigma_2$ with $\sigma^2_i = 0$, and let 
\begin{equation*}
\D =\{(\R\alpha_1, \hat{\theta}[\sigma_1 \hat{e}_{\alpha_1}]), (\R\alpha_2, \hat{\theta}[\sigma_2 \hat{e}_{\alpha_2}])\}.
\end{equation*}
Suppose that  the $\alpha_i$ are positive and that $\alpha_1$ follows $\alpha_2$ in clockwise order. Then $\S(\D)$ is obtained by adding the single ray
\begin{equation*}
(\R(\alpha_1 +\alpha_2), \hat{\theta}[[\bra \alpha_1, \alpha_2 \ket]_q  \sigma_1 \sigma_2 \hat{e}_{\alpha_1 + \alpha_2}]).
\end{equation*}
\end{lem}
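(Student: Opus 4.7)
My plan is to verify directly that the enlarged diagram $\D' = \D \cup \{(\R(\alpha_1+\alpha_2), \hat{\theta}_{12})\}$, with $\hat{\theta}_{12} := \hat{\theta}[[\langle\alpha_1,\alpha_2\rangle]_q\,\sigma_1\sigma_2\,\hat{e}_{\alpha_1+\alpha_2}]$, is saturated; uniqueness of $\S(\D)$ up to equivalence (noted in the paragraph preceding the lemma) then identifies $\D'$ as its saturation. The single key simplification is that $\sigma_1^2 = \sigma_2^2 = 0$ forces $(\sigma_1\sigma_2)^2 = 0$, so all adjoint actions need only be tracked modulo $(\sigma_1^2, \sigma_2^2)$ and only monomials in $\sigma_1$, $\sigma_2$, or $\sigma_1\sigma_2$ can survive.

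First I would specialize Lemma~\ref{qAction} (with $\Omega = 1$, $n = 0$) and reduce modulo $\sigma_i^2$: the product collapses to a single factor, giving the elementary formula $\hat{\theta}[\sigma_i \hat{e}_{\alpha_i}](\hat{e}_\beta) = \hat{e}_\beta + \sigma_i [\langle\alpha_i,\beta\rangle]_q\,\hat{e}_{\beta+\alpha_i}$, and likewise $\hat{\theta}_{12}(\hat{e}_\beta) = \hat{e}_\beta + \sigma_1\sigma_2 [\langle\alpha_1,\alpha_2\rangle]_q [\langle\alpha_1+\alpha_2,\beta\rangle]_q\,\hat{e}_{\beta+\alpha_1+\alpha_2}$. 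Next, take $\pi$ to be a small clockwise loop around the origin based in the angular sector immediately clockwise of $\alpha_1$. The clockwise slope order together with the sign convention $\epsilon_i = \pm 1$ in the definition of $\hat{\theta}_{\pi,\D'}^{(k)}$ identifies $\hat{\theta}_{\pi,\D'}$ with the composition $\hat{\theta}_2^{-1}\hat{\theta}_1^{-1}\hat{\theta}_2\hat{\theta}_{12}\hat{\theta}_1$, so saturation $\hat{\theta}_{\pi,\D'} = \operatorname{Id}$ is equivalent to the group-theoretic identity $\hat{\theta}_{12} = \hat{\theta}_2^{-1}\hat{\theta}_1\hat{\theta}_2\hat{\theta}_1^{-1}$.

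Applying the four factors of $\hat{\theta}_2^{-1}\hat{\theta}_1\hat{\theta}_2\hat{\theta}_1^{-1}$ in succession to $\hat{e}_\beta$ and dropping any term killed by $\sigma_i^2$, the $\sigma_1$-only and $\sigma_2$-only contributions cancel pairwise, leaving $\hat{e}_\beta + \sigma_1\sigma_2 \, C\, \hat{e}_{\beta+\alpha_1+\alpha_2}$ with $C = [b]_q[a+m]_q - [a]_q[b-m]_q$, where $a = \langle\alpha_1,\beta\rangle$, $b = \langle\alpha_2,\beta\rangle$, $m = \langle\alpha_1,\alpha_2\rangle$ (using $\langle\alpha_i, \beta+\alpha_j\rangle = \langle\alpha_i,\beta\rangle \pm m$). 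Comparing with $\hat{\theta}_{12}(\hat{e}_\beta)$, the lemma reduces to the purely algebraic $q$-identity
\[
[b]_q[a+m]_q - [a]_q[b-m]_q = [m]_q[a+b]_q,
\]
which I would verify by clearing the denominator $(q^{1/2}-q^{-1/2})^2$ and expanding: both sides evaluate to $q^{(a+b+m)/2} + q^{-(a+b+m)/2} - q^{(a+b-m)/2} - q^{-(a+b-m)/2}$. The main obstacle is not the algebra itself but the careful bookkeeping of the signs $\epsilon_i$ and the clockwise ordering, which determine the precise arrangement of factors above; once these conventions are pinned down the rest is a mechanical expansion.
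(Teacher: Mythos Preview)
Your proof is correct, but it takes a different route from the paper's. The paper observes that $\sigma_i^2 = 0$ collapses the $q$-dilogarithm to a single exponential, ${\bf E}(\sigma_i\hat{e}_{\alpha_i}) = \exp\!\big(\sigma_i\hat{e}_{\alpha_i}/(q^{1/2}-q^{-1/2})\big)$, and then computes the commutator $\hat{\theta}_2^{-1}\hat{\theta}_1\hat{\theta}_2\hat{\theta}_1^{-1}$ directly via the Baker--Campbell--Hausdorff formula, which truncates after the first bracket term (all higher nested brackets contain a $\sigma_i^2$). This stays entirely on the ``Lie algebra'' side and never touches a test vector $\hat{e}_\beta$; the factor $[\langle\alpha_1,\alpha_2\rangle]_q$ drops out of the single surviving bracket $[\hat{e}_{\alpha_1},\hat{e}_{\alpha_2}]$ with no further identity needed.

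By contrast you work on the ``module'' side: you specialize Lemma~\ref{qAction} modulo $\sigma_i^2$ to get the action on $\hat{e}_\beta$, compose the four automorphisms, and reduce the claim to the $q$-number identity $[b]_q[a+m]_q - [a]_q[b-m]_q = [m]_q[a+b]_q$, which you verify. This is a perfectly valid and self-contained argument; it trades the BCH shortcut for an explicit (and pleasant) $q$-identity. The paper's approach is shorter and more structural, while yours has the virtue of making the appearance of $[\langle\alpha_1,\alpha_2\rangle]_q$ visible as a concrete cancellation in the $q$-deformed multiplicities, which is in keeping with the spirit of Lemma~\ref{ScatterToMult}.
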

\begin{proof} Since $\sigma^2_i = 0$ we have
\begin{equation*}
\hat{\theta}[\sigma_i \hat{e}_{\alpha_i}] = \Ad {\bf E}(\sigma_i \hat{e}_{\alpha_i}) = \Ad \exp\left(\frac{\sigma_i\hat{e}_{\alpha_i}}{q^{\frac{1}{2}} - q^{-\frac{1}{2}}}\right).
\end{equation*}
Applying the Baker-Campbell-Hausdorff formula we find
\begin{equation*}
\hat{\theta}[\sigma_1 \hat{e}_{\alpha_1}] \hat{\theta}[\sigma_2 \hat{e}_{\alpha_2}] = \Ad\exp\left(\frac{\sigma_1\hat{e}_{\alpha_1}}{q^{\frac{1}{2}} - q^{-\frac{1}{2}}} + \frac{\sigma_2\hat{e}_{\alpha_2}}{q^{\frac{1}{2}} - q^{-\frac{1}{2}}} + \frac{1}{2}[\bra \alpha_1, \alpha_2 \ket]_q\frac{\sigma_1\sigma_2\hat{e}_{\alpha_1 + \alpha_2}}{q^{\frac{1}{2}} - q^{-\frac{1}{2}}}\right).
\end{equation*}
Composing on the left with $\hat{\theta}^{-1}[\sigma_2 \hat{e}_{\alpha_2}]$ gives $\Ad\exp\left(\frac{\sigma_1\hat{e}_{\alpha_1}}{q^{\frac{1}{2}} - q^{-\frac{1}{2}}} + [\bra \alpha_1, \alpha_2 \ket]_q\frac{\sigma_1\sigma_2\hat{e}_{\alpha_1 + \alpha_2}}{q^{\frac{1}{2}} - q^{-\frac{1}{2}}}\right)$. Finally composing on the right with $\hat{\theta}^{-1}[\sigma_1 \hat{e}_{\alpha_1}]$ leaves $\Ad{\bf E}([\bra \alpha_1, \alpha_2 \ket]_q  \sigma_1 \sigma_2 \hat{e}_{w_1\alpha_1 + w_2\alpha_2})$. 
\end{proof}
Let $\D = \{(\d_i, \hat{\theta}_i)\}$ be a scattering diagram which contains only lines. Fix (possibly trivial) factorizations $\hat{\theta}_i = \prod_j \hat{\theta}_{i j}$, where each $\hat{\theta}_{i j}$ is a finite product of $\hat{\theta}^{\Omega}[(-q^{\frac{1}{2}})^n\sigma\hat{e}_{k\alpha}]$. A \emph{perturbation} of $\D$ is a diagram of the form $\widetilde{\D} = \{(\d_i + \beta_{ij}, \hat{\theta}_{i j})\}$ for some $\beta_{ij}\in\R^2$. Suppose now that $\D$ contains only lines through the origin. Then the \emph{asymptotic diagram} $\D'_{\rm as}$ of an arbitrary scattering diagram $\D'$ is defined by replacing $\d = (\alpha'_0 + \R_{\geq 0}\alpha_0, \hat{\theta}_{\d})\in\D'$ by $\d' = (\R_{\geq 0}\alpha_0, \hat{\theta}_{\d})\in\D'_{\rm as}$, and $\d = (\alpha'_0 + \R_{\geq 0}\alpha_0, \hat{\theta}_{\d})\in\D'$ with $\d' = (\R\alpha_0, \hat{\theta}_{\d})\in\D_{\rm as}'$. If $\D$ is a scattering diagram which contains only lines through the origin, and $\widetilde{\D}$ some perturbation, then $\S(\D)$ is equivalent to $\S(\widetilde{\D})_{\rm as}$ (since $\hat{\theta}_{\pi, \S(\widetilde{\D})_{\rm as}} = \operatorname{Id}$ for a sufficiently large simple loop around the origin).

Let $R = \C[[t_1, \ldots, t_n]]$. Fix positive, primitive $\alpha_i \in \Gamma$ (not necessarily distinct) for $i = 1, \ldots, n$. We will write $\boldsymbol\alpha$ for the vector $(\alpha_1, \ldots, \alpha_n)$. A \emph{standard} scattering diagram is one of the form
\begin{equation*}
\D = \{(\R \alpha_i, \prod_{w\geq 1} \hat{\theta}[b_{i w} t_i \hat{e}_{w\alpha_i}]), i = 1, \ldots, n\}
\end{equation*}
where the coefficients $b_{iw} \in \C$ vanish for all but finitely many $w$. By the above discussion computing $[\hat{\theta}^{\ell_1}[t_1\hat{e}_{\alpha_1}], \hat{\theta}^{\ell_2}[t_2\hat{e}_{\alpha_2}]]$ can be regarded as a special case of the problem of saturating a standard scattering diagram (after a perturbation). Passing to $R_k$ we get a truncated diagram $\D_k$, and we can write
\begin{equation*}
\prod_{w\geq 1} \hat{\theta}[b_{i w} t_i \hat{e}_{w\alpha_i}] \equiv \Ad \exp\left(\sum^k_{j = 1}\sum_{w\geq 1} a'_{ijw} \hat{e}_{w\alpha_i} t^j_i\right)
\end{equation*}
for coefficients $a'_{ijw}\in\C[q^{\pm\frac{1}{2}}, ((q^n - 1)^{-1})_{n\geq 1}]$ which vanish for all but finitely many $w$. Following the notation of \eqref{factorization} there are standard factorizations over $\widetilde{R}_k$ given by
\begin{align}\label{stdFactorization}
\nonumber\Ad \exp\left(\sum^k_{j = 1}\sum_{w\geq 1} a'_{ijw} \hat{e}_{w\alpha_i} t^j_i\right) &= \prod_J \prod_w \Ad \exp\left((\#J)! a'_{i(\#J) w}\sum_{s\in J} u_{i s} \hat{e}_{w\alpha_i}\right)\\
&= \prod_J \prod_w \hat{\theta}[(\#J)! a_{i(\#J) w}\sum_{s\in J} u_{i s} \hat{e}_{w\alpha_i}].
\end{align}
Use these to get a perturbation $\widetilde{\D}_k = \{(\R \alpha_i + \beta_{iJw}, \hat{\theta}[(\#J)! a_{i(\#J) w}\sum_{s\in J} u_{i s} \hat{e}_{w\alpha_i}]\}$.
We construct an increasing sequence of diagrams $\widetilde{\D}^{i}_k$, starting from $\widetilde{\D}^0_k = \widetilde{\D}_k$, which stabilize to $\S(\widetilde{\D}_k)$ for $i \gg 1$. We assume inductively that every element of $\D^i_k$ is of the form $(\d, \hat{\theta}[c_{\d} u_{I(\d)} \hat{e}_{\alpha_{\d}}])$, where $c_{\d}\in\C$, $I(\d)\subset \{1, \ldots, n\}\times\{1, \ldots, k\}$ and we set $u_{I(\d)} = \prod_{(i, j)\in I_{\d}} u_{ij}$. We define a \emph{scattering pair} $\{\d_1, \d_2\} \subset \widetilde{\D}^i_k$ to be a pair of lines or rays such that $\d_1, \d_2 \notin \widetilde{\D}^{i-1}_k$, $\d_1\cap\d_2$ is a single point $\alpha'_0$ and $I(\d_1)\cap I(\d_2) = \emptyset$. We let $\widetilde{\D}^{i+1}_k$ be the union of $\widetilde{\D}^i_k$ with all the rays of the form
\begin{equation}\label{outRay}
(\alpha'_0 + \R_{\geq 0}(\alpha_{\d_1} + \alpha_{\d_2}), \hat{\theta}([\bra\alpha_{\d_1}, \alpha_{\d_2}\ket]_q c_{\d_1} c_{\d_2} u_{I(\d_1)} u_{I(\d_2)}\hat{e}_{\alpha_{\d_1} + \alpha_{\d_2}})),
\end{equation}
where $\{\d_1, \d_2\}$ are as above, and we assume (without loss of generality) that the slope of $\d_1$ is smaller than the slope of $\d_2$. For a suitably general initial perturbation $\widetilde{\D}_k$, we can assume that for all $i = 1, \ldots, n k$ and scattering pairs $\{\d_1, \d_2\}$ there is no further $\d \in \widetilde{\D}^i_k$ such that $\d_1 \cap \d_2 \cap \d \neq \emptyset$, and $I(\d_1) \cap I(\d_2) \cap I(\d) = \emptyset$. We will always make this genericity assumption on $\widetilde{\D}_k$ in what follows.   
\begin{lem} The $\widetilde{\D}^i_k$ stabilize to a scattering diagram $\widetilde{\D}^{\infty}_k$ for $i > n k$, and $\widetilde{\D}^{\infty}_k$ is equivalent to $\S(\widetilde{\D}_k)$.
\end{lem}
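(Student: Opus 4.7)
The plan is to establish the two assertions in turn: stabilization of the sequence $\widetilde{\D}^i_k$ for $i>nk$, and then equivalence of the stable diagram with the saturation $\S(\widetilde{\D}_k)$.

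For stabilization, the key bookkeeping device is the index set $I(\d)\subset\{1,\ldots,n\}\times\{1,\ldots,k\}$ attached to each ray by the inductive rule \eqref{outRay}: a ray produced from a scattering pair $\{\d_1,\d_2\}$ carries $I(\d_1)\sqcup I(\d_2)$, strictly larger than either parent's. A short induction on $i$ then shows that every ray first appearing in $\widetilde{\D}^i_k\setminus\widetilde{\D}^{i-1}_k$ has $|I(\d)|\geq i+1$. Since $u_{ij}^2=0$, any monomial $u_{I(\d)}$ with $|I(\d)|>nk$ vanishes in $\widetilde{R}_k$, making the corresponding $\hat\theta[c_\d u_{I(\d)}\hat e_{\alpha_\d}]$ trivial; hence no new nontrivial ray can appear once $i>nk$ and the sequence stabilizes.

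For the equivalence, I would verify directly that $\widetilde{\D}^{\infty}_k$ is saturated, since uniqueness of saturation up to equivalence then yields the claim. Saturation is a local condition: for a sufficiently small admissible loop $\pi$ around any $p\in\operatorname{Sing}(\widetilde{\D}^{\infty}_k)$ one needs $\hat\theta_{\pi,\widetilde{\D}^{\infty}_k}=\operatorname{Id}$. By the genericity hypothesis on $\widetilde{\D}_k$, each such $p$ is the intersection of a single scattering pair $\{\d_1,\d_2\}$, possibly together with the initial point of the outgoing ray they produce, and no further ray with an independently disjoint index set passes through $p$.

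Two subcases then arise. If $I(\d_1)\cap I(\d_2)\neq\emptyset$, the monomial $u_{I(\d_1)}u_{I(\d_2)}$ already vanishes in $\widetilde{R}_k$, so a direct Baker--Campbell--Hausdorff computation in the spirit of Lemma \ref{commutator} shows that $\hat\theta_{\d_1}$ and $\hat\theta_{\d_2}$ commute and no outgoing ray is required. If instead $I(\d_1)\cap I(\d_2)=\emptyset$, then $\sigma_i=c_{\d_i}u_{I(\d_i)}$ satisfies $\sigma_i^2=0$ and the pair falls precisely under the hypothesis of Lemma \ref{commutator}; that lemma identifies the ray in \eqref{outRay} added by the inductive construction as the unique one trivializing the local loop. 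The main obstacle, and the place where the genericity hypothesis does nontrivial work, is the reduction of global saturation to these two-body local models: one must rule out configurations in which a third ray whose index set is independently disjoint from those of $\d_1$ and $\d_2$ also passes through $p$, which would couple several scattering events at a single point and invalidate the simple local analysis above.
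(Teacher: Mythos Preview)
Your proof is correct and takes essentially the same approach as the paper: stabilization via the growth of $|I(\d)|$ against the fixed ambient set $\{1,\ldots,n\}\times\{1,\ldots,k\}$, and saturation by reducing to the local two-body computation of Lemma~\ref{commutator} at each singular point under the genericity hypothesis. The only cosmetic difference is that you verify saturation directly, whereas the paper phrases the same step by contradiction (a nontrivial small loop would force a missing outgoing ray of the form~\eqref{outRay}).
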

\begin{proof} If $\d_1, \d_2 \in \widetilde{\D}^i_k \setminus \widetilde{\D}^{i-1}_k$ for $i > n k$, then $I(\d_1) \cap I(\d_2) \neq 0$, so $u_{I(\d_1)} u_{I(\d_2)} = 0$, and $\widetilde{\D}^i_k = \widetilde{\D}^{i + 1}_k$. We set $\widetilde{\D}^{\infty}_k = \widetilde{\D}^{i}_k$ for $i > n k$. If $\widetilde{\D}^{\infty}_k \not\equiv \S(\widetilde{\D}_k)$, we could find a simple loop $\pi$ around a point $\alpha'_0 \in \operatorname{Sing}(\widetilde{\D}^{\infty}_k)$ for which $\hat{\theta}_{\pi, \widetilde{\D}^{\infty}_k} \neq \operatorname{Id}$. By construction this implies that there are two rays $\d_1, \d_2 \in \widetilde{\D}^{\infty}_k$ for which the ray \eqref{outRay} does not belong to $\widetilde{\D}^{\infty}_k$, a contradiction.
\end{proof}
\noindent Let $(\d, \hat{\th}_{\d})$ be an element of some $\widetilde{\D}^{i}_k$. We associate to $\d$ an immersed graph $\Upsilon_{\d} \subset \R^2$, with both bounded and unbounded edges, which is either trivalent or a single line. $\Upsilon_{\d}$ is constructed inductively; if $\d$ is a line then $\Upsilon_{\d} = \d$. Otherwise by construction $\d$ arises uniquely from the scattering of a pair $\{\d_1, \d_2\}\subset \widetilde{\D}^{i-1}_k$, with $\d_1\cap\d_2 = \alpha'_0$. We let 
\begin{equation*}
\Upsilon_{\d} = (\Upsilon_{\d_1}\setminus (\alpha'_0 + \R_{\geq 0}\alpha_{\d_1}))\cup(\Upsilon_{\d_2}\setminus (\alpha'_0 + \R_{\geq 0}\alpha_{\d_2}))\cup (\alpha'_0 + \R_{\geq 0}(\alpha_{\d_1} + \alpha_{\d_2})).
\end{equation*}
The induction makes sense since $\widetilde{\D}^0_k$ contains only lines. Suppose that $\Upsilon_{\d}$ is not a line. Then by construction it contains a finite number of unbounded edges, including the \emph{outgoing} edge $\d$. The other (\emph{incoming}) unbounded edges are all contained in the lines $\d_{i J w}$. By standard arguments we can think of $\Upsilon_{\d}$ as a rational tropical curve. More precisely there exists a unique equivalence class of parametrized rational tropical curves $h\! : \widetilde{\Upsilon}_{\d} \to \R^2$ where $(\widetilde{\Upsilon}_{\d}, w_{\widetilde{\Upsilon}_{\d}})$ is a simply connected trivalent weighted graph (with both bounded and unbounded edges, endowed with its standard topology), $h$ is a proper map with $h(\widetilde{\Upsilon}_{\d}) = \Upsilon_{\d}$, $w_{\widetilde{\Upsilon}_{\d}}(E) = w$ when $E$ is an unbounded edge mapping to a ray contained in $\d_{i J w}$, and the unbounded direction of $h(E)$ is $-\alpha_i$. We will not be very careful in distinguishing $h\! : \widetilde{\Upsilon}_{\d} \to \R^2$ from its image. By construction, we have
\begin{lem}\label{legs} There is a bijective correspondence between elements $(\d, \hat{\theta}_{\d}) \in \widetilde{\D}^i_k$ and rational tropical curves $\Upsilon_{\d}$ whose unbounded edges either coincide with $\d$ or are contained in $\d_{i J w}$ (we prescribe the multiplicity of such an unbounded edge to be $w$).
\end{lem}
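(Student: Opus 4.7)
The plan is to prove the bijection by induction on $i$, directly mirroring the inductive construction of $\widetilde{\D}^i_k$. For the base case $i = 0$, the elements of $\widetilde{\D}^0_k = \widetilde{\D}_k$ are exactly the lines $\d_{iJw}$, each of which produces the trivial tropical curve $\Upsilon_\d = \d_{iJw}$ (a single line of weight $w$). Both sides consist of the same finite list, indexed by $(i, J, w)$, so the base case is immediate.

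For the inductive step, assume the correspondence holds for $\widetilde{\D}^{i-1}_k$. A new element $(\d, \hat{\theta}_\d) \in \widetilde{\D}^i_k \setminus \widetilde{\D}^{i-1}_k$ arises, by construction, from a unique scattering pair $\{\d_1, \d_2\} \subset \widetilde{\D}^{i-1}_k$ with $\d_1 \cap \d_2 = \{\alpha'_0\}$ and $I(\d_1) \cap I(\d_2) = \emptyset$, and the tropical curve $\Upsilon_\d$ is obtained by grafting $\Upsilon_{\d_1}$ and $\Upsilon_{\d_2}$ at $\alpha'_0$ along a new trivalent vertex, with outgoing edge $\d$. Injectivity is then immediate: the outgoing edge of $\Upsilon_\d$ recovers $\d$ as a ray in $\R^2$, and the multiplicities of its unbounded edges recover the constituent $\d_{iJw}$'s, hence by induction recover the whole genealogy of $\d$.

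For surjectivity, start with any rational tropical curve $\Upsilon \subset \R^2$ whose unbounded edges either coincide with a ray $\d$ not among the $\d_{iJw}$ or are contained in some $\d_{iJw}$ with the prescribed weight. Since $\Upsilon$ is trivalent and simply connected, there is a unique outgoing edge (the one not lying in some $\d_{iJw}$), and its terminal vertex $\alpha'_0$ meets exactly two other edges which, when extended to outgoing rays, split $\Upsilon$ into two smaller tropical curves $\Upsilon_1, \Upsilon_2$ of the same type. By induction these come from rays $\d_1, \d_2$ in $\widetilde{\D}^{j}_k$ with $j < i$; the disjointness $I(\d_1)\cap I(\d_2) = \emptyset$ follows from the fact that all unbounded edges of $\Upsilon$ are distinct, and the balancing condition at $\alpha'_0$ gives $\alpha_\d = \alpha_{\d_1} + \alpha_{\d_2}$. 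Thus $\{\d_1, \d_2\}$ is a genuine scattering pair producing $\d$.

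The main potential obstacle is to rule out that two inequivalent scattering histories could yield the same $\Upsilon$, or that the inductive reconstruction might land outside $\widetilde{\D}^i_k$ (for example if three rays concurred at a single point with jointly disjoint index sets). Both issues are precisely excluded by the genericity hypothesis imposed on the initial perturbation $\widetilde{\D}_k$ in the paragraph preceding the lemma, which guarantees that every singular point of $\widetilde{\D}^\infty_k$ with compatible index sets is bivalent. Once this is invoked, the inductive cut-and-paste is unambiguous and the bijection follows.
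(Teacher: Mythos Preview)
Your argument is correct and is exactly what the paper has in mind: in the paper the lemma is asserted with the single phrase ``By construction, we have'', referring to the inductive definition of $\Upsilon_\d$ given in the paragraph immediately preceding the statement, and your proof simply spells out that induction (base case, grafting step, and the appeal to genericity) in detail. There is no further argument in the paper beyond this.
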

\noindent Let $V$ be a vertex of $\widetilde{\Upsilon}_{\d}$, and choose two incident edges $E_1, E_2$. Let $v_1, v_2$ denote the primitive vectors in the direction of $h(E_1), h(E_2)$. The \emph{tropical multiplicity of $h$ at $V$} is defined as $\mu(h, V) = w_{\widetilde{\Upsilon}_{\d}}(E_1)w_{\widetilde{\Upsilon}_{\d}}(E_2)|\det(v_1, v_2)|$. This is well defined by the balancing condition $\sum_i w_{\widetilde{\Upsilon}_{\d}}(E_i) v_i = 0$ where $E_i$ are the incident edges at $V$ and the $v_i$ primitive vectors in the direction of $h(E_i)$ ($i = 1, \ldots, 3$). Then one defines the  \emph{tropical multiplicity of $h$} as $\mu(h) = \prod_V \mu(h, V)$, the product over all trivalent vertices. Abusing notation slightly we will often denote this by $\mu(\Upsilon_{\d})$.
\begin{defn}\label{BGmult}
The \emph{Block-G\"ottsche multiplicity at $V$} is $\mu_q(h, V) = [\mu(h, V)]_q$. Similarly one sets $\mu_q(h) = \prod_V \mu_q(h, V)$. 
\end{defn}
\noindent Using this notion we can reconstruct $\hat{\theta}_{\d}$ from $\Upsilon_{\d}$ with its tropical structure.  
\begin{lem}\label{ScatterToMult} Fix $(\d, \hat{\th}_{\d}) \in \widetilde{\D}^{i}_k$. Let $\alpha_{\rm out} = \sum_{i, J, w} w\alpha_i$, summing over all $i, J, w$ for which there exists an unbounded incoming edge of $\Upsilon_{\d}$ contained in $\d_{iJw}$. Then 
\begin{equation*}
\hat{\theta}_{\d} = \hat{\theta}[\mu_q(\Upsilon_{\d})\left(\prod_{i, J, w} (\# J)! a_{i(\#J)w}\prod_{s\in J} u_{is} \right)\hat{e}_{\alpha_{\rm out}}]. 
\end{equation*}
\end{lem}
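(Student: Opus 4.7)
The plan is to induct on the stage $i$ in the construction of $\widetilde{\D}^i_k$. For the base case $\d \in \widetilde{\D}^0_k = \widetilde{\D}_k$, the graph $\Upsilon_{\d} = \d$ has no trivalent vertices, so $\mu_q(\Upsilon_{\d}) = 1$ by the empty-product convention, the only ``incoming'' edge contributes a single factor $(\#J)! a_{i(\#J)w}\prod_{s\in J} u_{is}$, and $\alpha_{\rm out} = w\alpha_i$, so the statement reduces to the definition of $\hat{\theta}_{\d_{iJw}}$ adopted in the construction of $\widetilde{\D}_k$.

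For the inductive step, assume $\d \in \widetilde{\D}^{i+1}_k \setminus \widetilde{\D}^i_k$ arises from a scattering pair $\{\d_1, \d_2\} \subset \widetilde{\D}^i_k$ meeting at $\alpha'_0$. The genericity assumption on $\widetilde{\D}_k$ guarantees that no third element of $\widetilde{\D}^i_k$ passes through $\alpha'_0$ with compatible $u$-support, so by \eqref{outRay}
\[
\hat{\theta}_{\d} = \hat{\theta}\bigl[[\bra \alpha_{\d_1}, \alpha_{\d_2}\ket]_q\, c_{\d_1} c_{\d_2}\, u_{I(\d_1)} u_{I(\d_2)}\, \hat{e}_{\alpha_{\d_1} + \alpha_{\d_2}}\bigr].
\]
The application of Lemma \ref{commutator} here is legitimate because $u_{ij}^2 = 0$ together with $I(\d_1) \cap I(\d_2) = \emptyset$ ensures $(u_{I(\d_1)} u_{I(\d_2)})^2 = 0$. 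By the inductive hypothesis applied to $\d_1, \d_2$, each product $c_{\d_j} u_{I(\d_j)}$ equals $\mu_q(\Upsilon_{\d_j})$ times the product of the data $(\#J)! a_{i(\#J)w}\prod_{s\in J} u_{is}$ over the incoming unbounded edges of $\Upsilon_{\d_j}$.

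The key identification is then to match $[\bra \alpha_{\d_1}, \alpha_{\d_2}\ket]_q$ with the Block-G\"ottsche multiplicity $\mu_q(h, V)$ at the new trivalent vertex $V = \alpha'_0$ of $\Upsilon_{\d}$. Inductively, $\alpha_{\d_j}$ is exactly the direction vector of the outgoing edge of $\Upsilon_{\d_j}$, so $\alpha_{\d_j} = w_j v_j$ with $v_j$ primitive and $w_j$ the weight. The standard antisymmetric form then gives $\bra \alpha_{\d_1}, \alpha_{\d_2}\ket = w_1 w_2 \det(v_1, v_2)$, which equals $\pm \mu(h, V)$; the clockwise ordering convention on scattering pairs (slope of $\d_1$ smaller than slope of $\d_2$) fixes the sign so that $[\bra \alpha_{\d_1}, \alpha_{\d_2}\ket]_q = \mu_q(h, V)$.

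Finally, the trivalent vertices of $\Upsilon_{\d}$ are the disjoint union of those of $\Upsilon_{\d_1}$ and $\Upsilon_{\d_2}$ together with $V$, so by multiplicativity $\mu_q(\Upsilon_{\d}) = \mu_q(\Upsilon_{\d_1}) \mu_q(\Upsilon_{\d_2}) \mu_q(h, V)$; likewise the incoming unbounded edges of $\Upsilon_{\d}$ split as the disjoint union of those of $\Upsilon_{\d_1}$ and $\Upsilon_{\d_2}$, and $\alpha_{\rm out}(\d) = \alpha_{\d_1} + \alpha_{\d_2}$. Combining the three factorizations yields the claimed formula. I expect the main obstacle to be the verification of the sign in the identification $[\bra \alpha_{\d_1}, \alpha_{\d_2}\ket]_q = \mu_q(h, V)$, namely checking that the clockwise ordering used in Lemma \ref{commutator} is compatible with the one implicitly fixed by \eqref{outRay} at every scattering event, so that no spurious sign is picked up in the inductive step.
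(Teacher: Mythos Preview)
Your proof is correct and follows essentially the same approach as the paper's own (very terse) argument: induction on $i$, with the key step being the identification $\langle \alpha_{\d_1}, \alpha_{\d_2}\rangle = \mu(h,V)$ at each new scattering vertex, after which the result follows from \eqref{outRay} and Lemma~\ref{commutator}. Your worry about the sign is unfounded: the convention in \eqref{outRay} that the slope of $\d_1$ is smaller than the slope of $\d_2$ forces $\langle \alpha_{\d_1}, \alpha_{\d_2}\rangle = \det(\alpha_{\d_1}, \alpha_{\d_2}) > 0$, so $[\langle \alpha_{\d_1}, \alpha_{\d_2}\rangle]_q = [\mu(h,V)]_q = \mu_q(h,V)$ on the nose, and this holds uniformly at every stage of the induction.
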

\begin{proof} Consider the statement: for two rays $\d_1, \d_2 \in \widetilde{\D}^j_k$, $j < i$, with the slope of $\d_1$ smaller than the slope of $\d_2$, incoming at $V$, and scattering in a ray $\d'$, one has $\mu(\Upsilon_{\d'}, V) = \bra \alpha_{\d_1}, \alpha_{\d_2}\ket$. One checks this for $j = 0$, and by induction it holds for all $j < i$ (the point being of course that $\bra -, - \ket$ is the same as $\det(-, -)$). The statement about $\hat{\theta}_{\d}$ and $\Upsilon_{\d}$ then follows by induction on $i$ and Lemma \ref{commutator}.   
\end{proof}
\noindent  Fix a \emph{weight vector} ${\bf w} = ({\bf w}_1, \ldots, {\bf w}_n)$, where each ${\bf w}_i$ is a collection of integers $w_{ij}$ (for $1 \leq i \leq n$ and $1 \leq j \leq l_i$) such that  $1 \leq w_{i1} \leq w_{i2} \leq \dots \leq w_{i{l_i}}$. For  $1 \leq j \leq l_i$ choose a general collection of parallel lines $\mathfrak{c}_{i j}$ in the direction $\alpha_i$. We attach the weight $w_{ij}$ to the line $\mathfrak{c}_{ij}$, and think of the lines $\mathfrak{c}_{ij}$ as incoming unbounded edges for connected, rational tropical curves $\Upsilon \subset \R^2$. We prescribe that such curves $\Upsilon$ have a single additional outgoing unbounded edge. Let us denote by $\mathcal{S}({\bf w}, \mathfrak{c}_{ij})$ the finite set of such tropical curves $\Upsilon$ (for a general, fixed choice of ends $\mathfrak{c}_{ij}$). 
\begin{defn}
We denote by $\widehat{N}^{\rm trop}_{{\boldsymbol\alpha}}({\bf w}) = \#^{\mu_q} \mathcal{S}({\bf w}, \mathfrak{c}_{ij})$ the $q$-deformed tropical count of curves $\Upsilon$ as above, i.e. the number of elements of $\mathcal{S}({\bf w}, \mathfrak{c}_{ij})$ counted with the Block-G\"ottsche multiplicity $\mu_q$. 
\end{defn}
\noindent By the results of \cite{mik}, $\widehat{N}^{\rm trop}_{{\boldsymbol\alpha}}({\bf w})$ is independent of a general choice of $\mathfrak{c}_{ij}$ (see also the Appendix), so in particular it makes sense to drop $\mathfrak{c}_{ij}$ from the notation. 
\begin{cor}\label{ScatterToTrop} Let $(\d, \hat{\theta}_{\d}) \in \S(\D)$. Then
\begin{equation*}
\hat{\theta}_{\d} = \Ad\exp\left(\sum_{\bf w} \sum_{\bf k} \frac{\widehat{N}^{\rm trop}_{{\boldsymbol\alpha}}({\bf w})}{|\Aut({\bf w},{\bf k})|}\left(\prod a_{i k_{ij} w_{ij}}t^{k_{ij}}_i\right)\frac{\hat{e}_{\sum_i |{\bf w}_i| \alpha_i}}{q^{\frac{1}{2}} - q^{-\frac{1}{2}}}\right),
\end{equation*}
where the first sum is over weight vectors ${\bf w} = ({\bf w}_1, \ldots, {\bf w}_n)$ for which $\sum_i |{\bf w}_i| \alpha_i \in \d$, the second sum is over collections ${\bf k} = ({\bf k_1}, \ldots, {\bf k_n})$ of vectors ${\bf k}_i$ with the same length as ${\bf w}_i$, such that $k_{ij} \leq k_{i(j+1)}$ if $w_{ij} = w_{i(j+1)}$, and $\Aut({\bf w},{\bf k})$ is the product of all stabilizers $\Aut({\bf w}_i, {\bf k}_i)$ in the symmetric group. 
\end{cor}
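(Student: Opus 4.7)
The plan is to mimic the proof of Theorem 2.8 in \cite{gps}, replacing the classical tropical multiplicity $\mu$ by the Block-G\"ottsche multiplicity $\mu_q$ and the Poisson automorphisms $\theta_{\alpha,f}$ by their $q$-analogues $\hat{\theta}[\sigma\hat{e}_\alpha]$. The structural ingredient that makes the replacement possible is Lemma \ref{commutator}: the elementary scattering of two rays with nilpotent parameters produces the $q$-number $[\bra\alpha_1,\alpha_2\ket]_q$, which is the $q$-deformation of the classical trivalent multiplicity $|\det(\alpha_1,\alpha_2)|=\bra\alpha_1,\alpha_2\ket$.

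Concretely, I would work modulo $\m_R^{k+1}$ with the perturbation $\widetilde{\D}_k$. As discussed before the statement, $(\d,\hat{\theta}_{\d})\in\S(\D)$ is equivalent to the ordered composition of all rays $(\d',\hat{\theta}_{\d'})\in\widetilde{\D}^\infty_k=\S(\widetilde{\D}_k)$ with $\alpha_{\d'}\in\R_{\geq 0}\alpha_{\d}$, and by Lemma \ref{legs} these rays are in bijection with rational tropical curves $\Upsilon$ whose outgoing leg has direction in $\R_{\geq 0}\alpha_{\d}$ and whose incoming legs are contained in the lines $\d_{iJw}$ (of weight $w$). Lemma \ref{ScatterToMult} expresses the contribution of each $\Upsilon$ as
\[
\hat{\theta}_{\d'} = \hat{\theta}\Bigl[\mu_q(\Upsilon)\,\prod_{i,J,w}(\#J)!\,a_{i(\#J)w}\prod_{s\in J}u_{is}\,\hat{e}_{\sum_i|{\bf w}_i|\alpha_i}\Bigr].
\]
Because the $J$-index sets at distinct scattering vertices of $\Upsilon$ are pairwise disjoint, no $u_{is}$ appears twice in the resulting monomial, so the relation $u_{is}^2=0$ makes the scalar coefficient nilpotent of order $2$; then $\hat{\theta}[x]=\Ad\exp(x/(q^{1/2}-q^{-1/2}))$ applies. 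Distinct parallel rays $\d'$ commute (their arguments are multiples of $\hat{e}_{n\alpha_{\d}}$), so their composition is $\Ad\exp$ of the sum of those arguments, divided by $q^{1/2}-q^{-1/2}$.

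Next I would reindex by weight data. Given $\Upsilon$, let ${\bf w}_i$ list the nondecreasing weights of its incoming legs in direction $\alpha_i$ and set $k_{ij}=\#J$ for the corresponding leg. By the Itenberg--Mikhalkin invariance \cite{mik}, summing $\mu_q(\Upsilon)$ over all tropical curves realizing a fixed $({\bf w},{\bf k})$ yields $\widehat{N}^{\rm trop}_{\boldsymbol\alpha}({\bf w})$ independently of the general choice of incoming lines $\mathfrak{c}_{ij}$ (which here play the role of the $\d_{iJw}$). To return from $\widetilde{R}_k$ to $R_k$ via $t_i=\sum_s u_{is}$, I would use the identity $t_i^{k_{ij}}=k_{ij}!\sum_{|J|=k_{ij}}\prod_{s\in J}u_{is}$, which converts the sum over disjoint $J_{ij}\subset\{1,\dots,k\}$ of sizes $k_{ij}$ into $\prod_j t_i^{k_{ij}}$; permutations of incoming legs sharing the same pair $(w_{ij},k_{ij})$ produce identical contributions, with overcount $|\Aut({\bf w},{\bf k})|$ by definition.

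The main obstacle I expect is the combinatorial bookkeeping: one must verify that the factorials $(\#J)!$ from Lemma \ref{ScatterToMult}, the multinomial coefficients from $t_i^{k_{ij}}=(\sum_s u_{is})^{k_{ij}}$, and the symmetry factors of labelled tropical curves combine to give exactly $1/|\Aut({\bf w},{\bf k})|$. This is the direct $q$-analogue of the matching between the classical coefficient $R_{{\bf P}_i\mid{\bf w}_i}/|\Aut({\bf w}_i)|$ and tropical counts in \eqref{VtoTrop}, and is essentially formal; the substantive $q$-deformation content, namely the appearance of $\mu_q$ in place of $\mu$, comes for free from the scattering rule of Lemma \ref{commutator}.
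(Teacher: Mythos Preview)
Your proposal is correct and follows essentially the same route as the paper's proof: work modulo $(t_1^{k+1},\ldots,t_n^{k+1})$, pass to the perturbed diagram $\widetilde{\D}_k$, use Lemmas \ref{legs} and \ref{ScatterToMult} to identify the parallel rays with tropical curves weighted by $\mu_q$, invoke the Itenberg--Mikhalkin invariance \cite{mik} to replace $\#^{\mu_q}\mathcal{S}({\bf w},\{\d_{iJw}\})$ by $\widehat{N}^{\rm trop}_{\boldsymbol\alpha}({\bf w})$, and then carry out the combinatorial conversion from the $u_{is}$ back to the $t_i$. The paper handles this last step by simply citing \cite{gps} Theorem 2.8, whereas you spell out the mechanism (the identity $t_i^{k_{ij}}=k_{ij}!\sum_{|J|=k_{ij}}\prod_{s\in J}u_{is}$ and the overcount by $|\Aut({\bf w},{\bf k})|$); this is more explicit but not a different argument.
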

\begin{proof} It is enough to prove the statement modulo $(t^{k+1}_1, \ldots, t^{k+1}_n)$ for all $k$. By the above discussion we know that $\hat{\theta}_{\d}$ equals the product of operators $\hat{\theta}_{\d'}$ for $\d' \in \widetilde{\D}^{\infty}_k$ parallel to $\d$. By Lemma \ref{legs} each $\d'$ corresponds to a unique set $\{\d_{i J w}\}\subset \widetilde{\D}^0_k$. The indices $w$ occurring in $\{\d_{i J w}\}$ for $i = 1, \ldots, n$ then define the collection of weight vectors ${\bf w}$. The indices $\{J\}$ on the other hand define a collection of subsets of $\{1, \ldots, n\}\times\{1, \ldots, k\}$; their cardinalities determine the collection of vectors ${\bf k}$. By Lemma \ref{ScatterToMult} the product of all $\hat{\theta}_{\d'}$ for fixed $\{\d_{i J w}\}$ equals $\Ad\exp\left(\#^{\mu_q}\mathcal{S}({\bf w}, \{\d_{i J w}\})\left(\prod k_{ij}! a_{i k_{ij} w_{ij}} \prod_J u_{J} \right)(q^{\frac{1}{2}} - q^{-\frac{1}{2}})^{-1}\hat{e}_{\sum_i |{\bf w}_i| \alpha_i}\right)$. The key point is that by the first statement in Theorem 1 of \cite{mik} $\#^{\mu_q}\mathcal{S}({\bf w}, \{\d_{i J w}\})$ only depends on $\{\d_{i J w}\}$ through ${\bf w}$ and equals $\widehat{N}^{\rm trop}_{\boldsymbol \alpha}({\bf w})$. For the reader's convenience a sketch of the proof is provided in the Appendix. From here the argument proceeds as in \cite{gps} Theorem 2.8.
\end{proof}

\section{Some $q$-deformed Gromov-Witten invariants}\label{mpsSection}
We follow the notation of the previous section. Consider the scattering diagram over $\C[[s, t]]$ given by $\D' = \{(\R \alpha_1, \hat{\theta}^{\ell_1}[s \hat{e}_{\alpha_1}]), (\R \alpha_2, \hat{\theta}^{\ell_2}[t \hat{e}_{\alpha_2}])\}$ for $\ell_i \in \N$. The saturation $\S(\D')$ can be recovered from $\S(\D)$ for $\D$ the diagram over $\C[[s_1, \ldots, s_{\ell_1}, t_1, \ldots, t_{\ell_2}]]$ given by 
\begin{equation*}
\D = \{(\R\alpha_1, \hat{\theta}[s_1 e_{\alpha_1}]), \ldots, (\R\alpha_1, \hat{\theta}[s_{\ell_1} e_{\alpha_1}]), (\R \alpha_2, \hat{\theta}[t_1 \hat{e}_{\alpha_2}]), \ldots, (\R \alpha_2, \hat{\theta}[t_{\ell_2} \hat{e}_{\alpha_2}])\}.
\end{equation*}
Recall
\begin{equation*}
\hat{\theta}[s_i \hat{e}_{\alpha_1}] = \Ad \exp\left(-\sum^k_{j = 1}\frac{s^j_i \hat{e}_{j\alpha_1}}{j((-q^{\frac{1}{2}})^j - (-q^{\frac{1}{2}})^{-j})}\right),
\end{equation*}
so in the notation of equation \eqref{stdFactorization} we have
\begin{equation*}
a'_{i j j} = \frac{(-1)^j}{j((-q^{\frac{1}{2}})^j - (-q^{\frac{1}{2}})^{-j}))},\quad a_{ijj} = ((-q^{\frac{1}{2}}) - (-q^{-\frac{1}{2}}))a'_{ijj} = \frac{(-1)^{j - 1}}{j [j]_q}
\end{equation*}
while all other $a_{i j w}$ vanish. It follows that in the notation of Corollary \ref{ScatterToTrop}, the coefficients $a_{i k_{ij} w_{ij}}$ are nonvanishing precisely when the sequences $k_{ij}$ and $w_{ij}$ coincide, in which case one has
\begin{equation*}
a_{i w_{ij} w_{ij}} = \frac{(-1)^{w_{ij} - 1}}{w_{ij} [w_{ij}]_q}.
\end{equation*}
The same computation holds for $\hat{\theta}[t_i \hat{e}_{\alpha_2}]$. For the diagram $\D$ the $(\ell_1 + \ell_2)$-tuple of weight vectors ${\bf w}$ required by Corollary \ref{ScatterToTrop} can be parametrized in a more convenient way. We first fix a pair of ordered partitions $({\bf P}_1, {\bf P}_2)$ of length $\ell_1$, $\ell_2$ respectively. The part ${\bf P}_{1i}$ determines the size of a weight vector corresponding to $(\R\alpha_1, \hat{\theta}[s_i e_{\alpha_1}])$, and similarly for ${\bf P}_{2i}$. So we can enumerate instead in terms of just a pair of weight vectors $({\bf w}_1, {\bf w}_2)$, with ${\bf w}_i = (w_{i1} , \ldots , w_{i l_i})$ such that $1 \leq w_{i1} \leq w_{i2} \leq \dots \leq w_{i{l_i}}$, plus a pair of \emph{compatible set partitions} $(I_{1, \bullet}, I_{2, \bullet})$ of the index sets $\{1, \ldots, l_{1}\}$, $\{1, \ldots, l_{2}\}$, that is set partitions $I_{i, \bullet}$ for which $\sum_{s \in I_{i j}} w_{i s} = {\bf P}_{ij}$. Let us denote by $\#\{I_{i, \bullet}, {\bf P}_i | {\bf w}_i\}$ the number of set partitions of ${\bf w}_i$ which are compatible with ${\bf P}_i$, and introduce the \emph{q-deformed ramification factors}
\begin{equation}\label{qRam}
\widehat{R}_{{\bf P}_i | {\bf w}_i, q} = \prod_{j} \frac{(-1)^{w_{ij}-1}}{w_{ij}[w_{ij}]_q} \#\{I_{i, \bullet}, {\bf P}_i | {\bf w}_i\}.
\end{equation}
Then we may apply Corollary \ref{ScatterToTrop} to see that the operators appearing in the saturation of $\D$ are given by
\begin{equation}\label{qVtoTrop}
\hat{\theta}_{a_1\alpha_1 + a_2\alpha_2} = \Ad\exp\left(\sum_{|{\bf P}_1| = k a_1} \sum_{|{\bf P}_2| = k a_2} \sum_{{\bf w}} \prod^2_{i = 1} \frac{\widehat{R}_{{\bf P}_i | {\bf w}_i}}{|\Aut({\bf w}_i)|} \widehat{N}^{\rm trop}_{(\alpha_1, \alpha_2)}({\bf w}) s^{{\bf P}_1} t^{{\bf P}_2} \frac{\hat{e}_{k(a_1\alpha_1 + a_2\alpha_2)}}{q^{\frac{1}{2}}- q^{-\frac{1}{2}}} \right).
\end{equation}
By specialization we finally obtain the analogue of the basic GPS identity \eqref{VtoTrop}. 
\begin{prop}\label{qVtoTropSpec} The operators in the slope ordered expansion for $[\hat{\theta}^{\ell_1}[t \hat{e}_{\alpha_1}], \hat{\theta}^{\ell_2}[t \hat{e}_{\alpha_2}]]$ are obtained by setting $s_i = t_j = t$ in \eqref{qVtoTrop} for $i, j = 1, \ldots, n$.
\end{prop}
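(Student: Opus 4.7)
The plan is to derive Proposition \ref{qVtoTropSpec} directly from \eqref{qVtoTrop} by a specialization of formal variables. I would consider the $\C$-algebra homomorphism $\varphi\colon \C[[s_1, \ldots, s_{\ell_1}, t_1, \ldots, t_{\ell_2}]] \to \C[[t]]$ defined by $s_i \mapsto t$ and $t_j \mapsto t$. Because the generating automorphisms $\hat\theta^{\Omega}[\sigma(-q^{\frac{1}{2}})^n \hat e_\alpha]$ are manifestly functorial in the base ring, $\varphi$ induces a continuous homomorphism $\U_{\Gamma, \C[[s_\bullet, t_\bullet]]} \to \U_{\Gamma, \C[[t]]}$ sending each $\hat\theta[s_i \hat e_{\alpha_1}]$ to $\hat\theta[t \hat e_{\alpha_1}]$ (and similarly on the $\alpha_2$ side). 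These $\ell_1$ commuting operators live on the same ray $\R\alpha_1$ of $\D$ and multiply to give ${\bf E}(t\hat e_{\alpha_1})^{\ell_1} = {\bf E}^{\ell_1}(t\hat e_{\alpha_1})$, so under $\varphi$ their composition maps to $\hat\theta^{\ell_1}[t \hat e_{\alpha_1}]$; hence the image $\varphi(\D)$ has the same combined line operators as $\D'$.

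Next I would verify that $\varphi$ commutes with saturation. The perturbation construction, the path-ordered product $\hat\theta_{\pi, \D}$, and the inductive process producing $\widetilde{\D}^\infty_k$ in Section \ref{scattering} are all defined via group-theoretic operations in $\U_{\Gamma, R}$, and are therefore preserved by any base-ring homomorphism. Applying $\varphi$ ray-by-ray to $\S(\D)$ produces a saturated scattering diagram over $\C[[t]]$ (closed admissible loops still yield the identity) whose incoming lines match those of $\D'$. By uniqueness of saturation up to equivalence, $\varphi(\S(\D))$ is equivalent to $\S(\D')$, so the operators on the outgoing rays of $\S(\D')$ are obtained by applying $\varphi$ to the corresponding operators of $\S(\D)$ computed in \eqref{qVtoTrop}.

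Specializing \eqref{qVtoTrop} under $\varphi$, the monomial $s^{{\bf P}_1} t^{{\bf P}_2}$ becomes $t^{|{\bf P}_1|+|{\bf P}_2|} = t^{k(a_1+a_2)}$ and every other factor is unaffected, so the specialized right-hand side gives exactly the operator $\hat\theta_{a_1\alpha_1+a_2\alpha_2}$ in the slope-ordered expansion of $[\hat\theta^{\ell_1}[t\hat e_{\alpha_1}], \hat\theta^{\ell_2}[t\hat e_{\alpha_2}]]$. The only real obstacle is confirming that the collapse of $\ell_1$ (respectively $\ell_2$) parallel lines through the origin in $\D$ to a single line in $\D'$ introduces no new scatterings; this follows because operators attached to coincident rays in the same lattice direction commute, and contribute no additional rays during the inductive saturation procedure.
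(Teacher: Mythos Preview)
Your proposal is correct and is essentially the same approach as the paper's. The paper offers no explicit proof beyond the sentence ``By specialization we finally obtain the analogue of the basic GPS identity \eqref{VtoTrop}'', and what you have written is a careful unpacking of that specialization: functoriality of the generators under the base-change $\varphi$, compatibility of $\varphi$ with the saturation procedure (hence with the slope-ordered expansion), and the observation that the $\ell_i$ commuting copies of $\hat\theta[t\hat e_{\alpha_i}]$ on the same line multiply to $\hat\theta^{\ell_i}[t\hat e_{\alpha_i}]$.
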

\noindent By comparing \eqref{qVtoTrop} with the formulae \eqref{gpsThm}-\eqref{gpsThm2} it is natural to introduce a class of putative $q$-deformed Gromov-Witten invariants.
\begin{defn}\label{refinedGW} We define a $q$-deformation of the invariant $N[({\bf P}_1, {\bf P}_2)]$ by
\begin{equation*}
\widehat{N}[({\bf P}_1, {\bf P}_2)] = \sum_{{\bf w}} \prod^2_{i = 1} \frac{\widehat{R}_{{\bf P}_i | {\bf w}_i}}{|\Aut({\bf w}_i)|} \widehat{N}^{\rm trop}_{(\alpha_1, \alpha_2)}({\bf w}).
\end{equation*}
\end{defn}
\noindent The existence of a $q$-deformation of $N[({\bf P}_1, {\bf P}_2)]$ is natural from the point of view of \cite{gps} section 6, which shows how these invariants may be regarded as Gromov-Witten invariants of a log Calabi-Yau surface. For the same reason the invariants admit a conjectural BPS structure described in ibid. sections 6.2 and 6.3. In the special case when $(|{\bf P}_1|, |{\bf P}_2|)$ is primitive one forms the generating series
\begin{equation*}
N(\tau) = \sum_{k\geq 1} N[(k{\bf P}_1, k{\bf P}_2)] \tau^k,
\end{equation*}
where $k {\bf P}_i$ is the ordered partition with parts $k {\bf P}_{ij}$. Then the BPS invariants $n_k[({\bf P}_1, {\bf P}_2)]$ are defined through the equality
\begin{equation}\label{BPS}
N(\tau) = \sum_{k\geq 1}n_k[({\bf P}_1, {\bf P}_2)]\sum_{d\geq 1} \frac{1}{d^2}\binom{d(k - 1) - 1}{d - 1}\tau^{dk}. 
\end{equation}
As usual the BPS invariants $n_k[({\bf P}_1, {\bf P}_2)]$ are expected to be integers (a special case of \cite{gps} Conjecture 6.2).\\ 

\begin{exa}
Let $({\bf P}_1, {\bf P}_2) = (1, 1 + 1)$. Then $N[(1,1 + 1)] = 1$, and a lengthy computation using \eqref{TropToGW} shows that $N[(2, 2 + 2)] = -\frac{1}{4}$, $N[(3, 3 + 3)] = \frac{1}{9}$. Thus for the first few BPS invariants we find $n_1[(1, 1 + 1)] = 1$, $n_2[(1, 1 +1)] = N[(2, 2 + 2)] + \frac{1}{4} N[(1, 1 + 1)] = 0$, $n_3[(1, 1+1)] = N[(3, 3 + 3)] - \frac{1}{9}N[(1,1 + 1)] = 0$. In this particular example, using the quiver techniques developed in \cite{reinweist}, one could show that indeed $n_k[(1, 1 + 1)] = 0$ for $k > 1$. Here is a sketch of the proof: using the (very simple) representation theory of complete bipartite quivers with one source and at most two sinks, one can check that in this case \cite{reinweist} Lemma 10.1 applies, and deduce the equality $N[(k,  k + k)] = \frac{(-1)^{k-1}}{k^2}$ for $k \geq 1$. Using the definition of $n_k[(1, 1 + 1)]$ and assuming inductively that $n_{i}[(1, 1 + 1)] = \delta_{1i}$, we find $n_{k}[(1, 1 + 1)] = N[(k, k + k)] + \frac{(-1)^k}{k^2}N[1, 1 + 1] = 0$.  
\end{exa}

\noindent For primitive $(|{\bf P}_1|, |{\bf P}_2|)$, $N[({\bf P}_1, {\bf P}_2)]$ equals the BPS invariant $n_1[({\bf P}_1, {\bf P}_2)]$. As a nontrivial consequence there is a more straightforward candidate for $\widehat{N}[({\bf P}_1, {\bf P}_2)]$. Consider the complete bipartite quiver $\mathcal{K}(\ell_1, \ell_2)$ endowed with its natural notion of stability (\cite{us} section 4). The pair $({\bf P}_1, {\bf P}_2)$ induces a dimension vector for $\mathcal{K}(\ell_1, \ell_2)$, and we denote by $\mathcal{M}({\bf P}_1, {\bf P}_2)$ the resulting moduli space of stable representations. By a result of Reineke and Weist (\cite{reinweist} Corollary 9.1) we have
\begin{equation*}
N[({\bf P}_1, {\bf P}_2)] = \chi(\mathcal{M}({\bf P}_1, {\bf P}_2)).
\end{equation*}
Accordingly a natural choice would be to define a $q$-deformed Gromov-Witten invariant as the Poincar\'e polynomial $P(\mathcal{M}({\bf P}_1, {\bf P}_2))(q) = \sum_j b_j(\mathcal{M}({\bf P}_1, {\bf P}_2)) q^{\frac{j}{2}}$ (recall that in fact the odd Betti numbers $b_{2j+1}$ vanish). However it makes more sense to have a notion which is invariant under the change of variable $q \mapsto q^{-1}$. Thus we set
\begin{equation*}
\widehat{N}'[({\bf P}_1, {\bf P}_2)] = \widehat{P}(\mathcal{M}({\bf P}_1, {\bf P}_2))(q) := q^{-\frac{1}{2}\dim \mathcal{M}({\bf P}_1, {\bf P}_2)} P(\mathcal{M}({\bf P}_1, {\bf P}_2))(q). 
\end{equation*}
In the notation of \cite{mozrein} $\widehat{P}(\mathcal{M}({\bf P}_1, {\bf P}_2))(q) = P([\mathcal{M}({\bf P}_1, {\bf P}_2)]_{\rm vir})(q)$, where for $X$ an irreducible smooth algebraic variety one sets $[X]_{\rm vir} = q^{-\frac{1}{2}\dim X} [X]$, a Laurent polynomial with coefficients in the Grothen\-dieck ring of varieties.  
\begin{thm}\label{comparison} The two choices of quantization coincide: $\widehat{N}'[({\bf P}_1, {\bf P}_2)] = \widehat{N}[({\bf P}_1, {\bf P}_2)]$.
\end{thm}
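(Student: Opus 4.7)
The plan is to extract both $\widehat{N}[({\bf P}_1,{\bf P}_2)]$ and $\widehat{N}'[({\bf P}_1,{\bf P}_2)]$ as one and the same coefficient of an operator appearing in the slope-ordered factorization of the commutator $[\hat{\theta}^{\ell_1}[s\hat{e}_{\alpha_1}],\hat{\theta}^{\ell_2}[t\hat{e}_{\alpha_2}]]$ lifted to $R=\C[[s_1,\dots,s_{\ell_1},t_1,\dots,t_{\ell_2}]]$, i.e.\ of an operator on a ray of the saturation $\S(\D)$ of the standard scattering diagram $\D$ of Section \ref{mpsSection}. One side of the identification is essentially by construction: by Corollary \ref{ScatterToTrop} combined with the explicit values of $a_{ijj}$ computed just before \eqref{qVtoTrop}, the coefficient of $s^{{\bf P}_1}t^{{\bf P}_2}\hat{e}_{|{\bf P}_1|\alpha_1+|{\bf P}_2|\alpha_2}(q^{1/2}-q^{-1/2})^{-1}$ inside the $\Ad\exp$ on the outgoing ray of slope $(|{\bf P}_1|,|{\bf P}_2|)$ of $\S(\D)$ is exactly $\widehat{N}[({\bf P}_1,{\bf P}_2)]$ by Definition \ref{refinedGW}.

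On the other hand, one interprets $\D$ via refined Kontsevich-Soibelman wall-crossing \cite{ks} for the complete bipartite quiver $\mathcal{K}(\ell_1,\ell_2)$. Under the identification of $\Gamma=\Z^{\ell_1+\ell_2}$ with the numerical Grothendieck group (so that $s_i,t_j$ track the dimensions at individual vertices), the operators $\hat{\theta}[s_i\hat{e}_{\alpha_1}]$, $\hat{\theta}[t_j\hat{e}_{\alpha_2}]$ are the quantum dilogarithm operators of the simple representations at the corresponding vertices. The refined wall-crossing formula then describes the operator attached to the ray of slope $(|{\bf P}_1|,|{\bf P}_2|)$ in $\S(\D)$ as a quantum dilogarithm whose exponent carries the refined DT invariant $\Omega(\gamma;q^{1/2})$ of $\mathcal{K}(\ell_1,\ell_2)$ at $\gamma=({\bf P}_1,{\bf P}_2)$. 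Since $({\bf P}_1,{\bf P}_2)$ is primitive, semistability coincides with stability, $\mathcal{M}({\bf P}_1,{\bf P}_2)$ is a smooth projective variety, and $\Omega(({\bf P}_1,{\bf P}_2);q^{1/2})=q^{-\frac{1}{2}\dim\mathcal{M}({\bf P}_1,{\bf P}_2)}P(\mathcal{M}({\bf P}_1,{\bf P}_2))(q)=\widehat{N}'[({\bf P}_1,{\bf P}_2)]$. Extracting the same coefficient as in the previous paragraph therefore yields $\widehat{N}'[({\bf P}_1,{\bf P}_2)]$, and the theorem follows.

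The main obstacle is the identification in the second paragraph: one must check carefully that in the normalization of Section \ref{quantum} (with the signs $(-q^{1/2})^n$ and the factor $(q^{1/2}-q^{-1/2})^{-1}$), our operators are exactly those that appear in the KS wall-crossing for $\mathcal{K}(\ell_1,\ell_2)$ in the correct stability chamber, and that no extra shift is introduced when reading off $\Omega(({\bf P}_1,{\bf P}_2);q^{1/2})=\widehat{P}(\mathcal{M}({\bf P}_1,{\bf P}_2))(q)$. Once this is in place, the resulting equality $\widehat{N}=\widehat{N}'$ is precisely the motivic Manschot-Pioline-Sen formula in this context: the expansion of $\widehat{P}(\mathcal{M}({\bf P}_1,{\bf P}_2))$ as a weighted sum over weight vectors ${\bf w}$ of Poincar\'e polynomials of moduli on the abelianized quiver $\mathcal{K}(|{\bf w}_1|,|{\bf w}_2|)$ with primitive dimension vector $(1,\dots,1)$ matches the Block-G\"ottsche tropical sum, because the refined DT of the abelianized quiver is itself described tropically by Corollary \ref{ScatterToTrop}. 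This is the promised interpretation of MPS as the equality of two quantizations announced in the introduction.
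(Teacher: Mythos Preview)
Your strategy is correct but takes a genuinely different route from the paper. The paper does \emph{not} compare both sides directly as coefficients of the same operator in $\S(\D)$. Instead it first rewrites $\widehat{N}[({\bf P}_1,{\bf P}_2)]$ as a sum over refinements $(k^1,k^2)\vdash({\bf P}_1,{\bf P}_2)$, then invokes the refined MPS formula \eqref{mps} as an \emph{input} to express $\widehat{P}(\mathcal{M}({\bf P}_1,{\bf P}_2))$ as the same kind of sum, with $\widehat{P}(\mathcal{M}_{d(k^1,k^2)}(\mathcal{N}))$ in place of $\widehat{N}^{\rm trop}_{(\alpha_1,\alpha_2)}({\bf w}(k^1),{\bf w}(k^2))$. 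The proof is completed by establishing the termwise identity $\widehat{P}(\mathcal{M}_{d(k^1,k^2)}(\mathcal{N}))(q)=\widehat{N}^{\rm trop}_{(\alpha_1,\alpha_2)}({\bf w}(k^1),{\bf w}(k^2))$ for the \emph{abelianized} quiver $\mathcal{N}$ with its thin dimension vector; this is done by computing the leading term of the slope-$\mu$ factor in the product \eqref{quiverProduct} two ways, via Reineke's Harder--Narasimhan recursion on one side and via the scattering description (Lemma~\ref{ScatterToMult}) on the other, after a change of variables collapsing $\widetilde\Gamma=\Z\mathcal{Q}_0$ to $\Z^2$.

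Your approach instead applies Reineke/KS wall-crossing directly to the \emph{non-abelian} quiver $\mathcal{K}(\ell_1,\ell_2)$ over $\Z^{\ell_1+\ell_2}$, so that MPS is not used but rather recovered as a corollary. This works because the projection $\Z^{\ell_1+\ell_2}\to\Z^2$, $(a_\bullet,b_\bullet)\mapsto(\sum a_i,\sum b_j)$, intertwines the antisymmetrized Euler form with the standard form on $\Z^2$ (bipartiteness), hence induces an algebra map sending $\hat e_{(a,b)}\mapsto s^a t^b\,\hat e_{(|a|,|b|)}$ under which the KS product for $\mathcal{K}(\ell_1,\ell_2)$ becomes precisely the scattering problem for $\D$; the coefficient you extract is then, by Reineke's identification for primitive dimension vector, $\widehat{P}(\mathcal{M}({\bf P}_1,{\bf P}_2))$. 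What you flag as ``the main obstacle'' is exactly this lattice-collapse step; it is the same mechanism the paper uses (citing \cite{us} Proposition~4.3), only you need it for $\mathcal{K}(\ell_1,\ell_2)$ rather than for the support of $d(k^1,k^2)$ in $\mathcal{N}$. Your route is more conceptual and yields MPS as output; the paper's route isolates the genuinely new tropical content in the cleaner thin/coprime setting, at the cost of importing MPS.
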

\begin{proof} We will reduce the statement to a representation-theoretic formula due to Manschot, Pioline and Sen \cite{mps}. A \emph{refinement} of $({\bf P}_1, {\bf P}_2)$ is a pair of sets of integers $(k^1, k^2) = (\{k^1_{w, i}\}, \{k^2_{w, j}\})$ such that for $i=1,\ldots,\ell_1$ and $j=1,\ldots, \ell_2$ we have ${\bf P}_{1 i}=\sum_w wk_{w, i}^1,\,{\bf P}_{2 j}=\sum_w wk_{w, j}^2$. We denote refinements by $(k^1,k^2)\vdash ({\bf P}_1,{\bf P}_2)$, and write $m_w(k^i) = \sum_j k^i_{w, j}$. A fixed refinement $k^i$ induces a weight vector ${\bf w}(k^i) = (w_{i1}, \ldots, w_{it_i})$ of length $t_i = \sum_w m_w(k^i)$, by $w_{ij}=w\text{ for all }j=\sum_{r=1}^{w-1}m_r(k^i)+1,\ldots,\sum_{r=1}^{w}m_r(k^i)$. By a combinatorial argument contained in the proof of \cite{us} Lemma 4.2 we may rearrange Definition \ref{refinedGW} as
\begin{equation*} 
\widehat{N}[({\bf P}_1, {\bf P}_2)] = \sum_{(k_1,k_2)\vdash ({\bf P}_1,{\bf P}_2)}\prod^2_{i=1}\prod^{\ell_i}_{j=1}\prod_{w}\frac{(-1)^{k^i_{w,j}(w-1)}}{k^i_{w,j}!w^{k^i_{w,j}}[w]^{k^i_{w,j}}_q}\widehat{N}^{\rm trop}_{(\alpha_1, \alpha_2)}({\bf w}(k^1), {\bf w}(k^2)).
\end{equation*}
From now we follow closely the treatment in \cite{us} section 4; in particular we will formulate the MPS result using the infinite bipartite quiver $\mathcal{N}$ introduced there, with vertices $\mathcal{N}_0=\{i_{(w,m)}\mid (w,m)\in\N^2\}\cup \{j_{(w,m)}\mid (w,m)\in\N^2\}$ 
and arrows $\mathcal{N}_1=\{\alpha_1,\ldots,\alpha_{w\cdot w'}:i_{(w',m')}\rightarrow j_{(w,m)},\forall\,w,w',m,m'\in\N\}$. The quiver $\mathcal{N}$ comes with a notion of stability in terms of a slope function $\mu$ (there is no possible confusion with the tropical multiplicity $\mu$ since the latter will not appear in this proof). Recall that $(k^1,k^2)$ induces a \emph{thin} (i.e. type one) dimension vector $d(k^1,k^2)$ for $\mathcal{N}$, so we get a moduli space of stable \emph{abelian} representations $\mathcal{M}_{d(k^1,k^2)}(\mathcal{N})$. Following the argument leading to equation (1) of \cite{us}, and after rearranging to pass from $P$ to $\widehat{P}$, the MPS formula for Poincar\'e polynomials in this setup can be expressed as
\begin{equation*}
\widehat{P}(\mathcal{M}({\bf P}_1,{\bf P}_2))(q) = \sum_{(k^1,k^2)\vdash({\bf P}_1,{\bf P}_2)}\prod^2_{i=1}\prod^{\ell_i}_{j=1}\prod_{w}\frac{(-1)^{k^i_{w, j}(w-1)}}{k^i_{w, j}!w^{k^i_{w, j}}[w]^{k^i_{w, j}}_q}\widehat{P}(\mathcal{M}_{d(k^1,k^2)}(\mathcal{N}))(q).
\end{equation*}
Indeed in the general case (for $(|{\bf P}_1|, |{\bf P}_2|)$ not necessarily primitive) one can rewrite the MPS formula as 
\begin{equation*}
\frac{[R^{\rm sst}_{({\bf P}_1,{\bf P}_2)}(K(\ell_1, \ell_2))]_{\rm vir}}{[\operatorname{GL}({\bf P}_1,{\bf P}_2)]_{\rm vir}} = \sum_{(k^1,k^2)\vdash({\bf P}_1,{\bf P}_2)}\prod^2_{i=1}\prod^{\ell_i}_{j=1}\prod_{w}\frac{(-1)^{k^i_{w, j}(w-1)}}{k^i_{w, j}!w^{k^i_{w, j}}[w]^{k^i_{w, j}}_q}\frac{[R^{\rm sst}_{d(k^1, k^2)}(\mathcal{N})]_{\rm vir}}{[(\C^*)^{|k^1| + |k^2|}]_{\rm vir}}
\end{equation*}
where we have denoted by $R^{\rm sst}(-)$ the semistable loci, and by $\operatorname{GL}({\bf P}_1,{\bf P}_2)$ the usual basechange group corresponding to a dimension vector; this is explained in \cite{mozrein} section 8.1. The claim of the Lemma then follows from the identity 
\begin{equation}\label{TropToPoin}
\widehat{P}(\mathcal{M}_{d(k^1,k^2)}(\mathcal{N}))(q) = \widehat{N}^{\rm trop}_{(\alpha_1, \alpha_2)}({\bf w}(k^1), {\bf w}(k^2)).
\end{equation}
To prove this let $\mathcal{Q} \subset \mathcal{N}$ denote the finite subquiver which is the support of $(k^1, k^2)$. Consider the lattice $\widetilde{\Gamma} = \Z\mathcal{Q}_0$ endowed with the bilinear form $\bra -, -\ket$ which is the antisymmetrization of the Euler form of $\mathcal{Q}_0$. We will write $\widetilde{\Gamma}^+_{\mu}$ for the subsemigroup of dimension vectors of slope $\mu$. Let $R = \C[[t_{i_{(w',m')}}, t_{j_{(w, m)}}]]$. We work in the group $\U_{\widetilde{\Gamma}, R}$ and consider the product of operators
\begin{equation}\label{quiverProduct}
\prod_{j_{(w, m)}\in \mathcal{Q}_0} \hat{\theta}[t_{j_{(w, m)}}\hat{e}_{j_{(w, m)}}]\prod_{i_{(w',m')}\in \mathcal{Q}_0} \hat{\theta}[t_{i_{(w',m')}}\hat{e}_{i_{(w',m')}}].
\end{equation}
By \cite{reineke} Lemma 4.3, \eqref{quiverProduct} can be expressed as an ordered product $\prod^{\leftarrow}_{\mu \in \Q} \Ad \widetilde{P}_{\mu}$ where $\widetilde{P}_{\mu} \in \hat{\g}_q$ is an element of the form $\sum_{d \in \widetilde{\Gamma}^+_{\mu}} \tilde{p}_d(q) t^d \hat{e}_{d}$ for some $\tilde{p}_d(q)\in\Q(q)$. The $\widetilde{P}_{\mu}$ are characterized in terms of the Harder-Narasimhan recursion given in \cite{reineke} Definition 4.1 and the Remark following it. By the definition of the $\widetilde{P}_{\mu}$, using that $(|{\bf P}_1|, |{\bf P}_2|)$ is primitive and $d(k^1, k^2)$ is thin, one shows that the first nontrivial term in $\widetilde{P}_{\mu(d(k^1, k^2))}$ is $\tilde{p}_{d(k^1, k^2)}(q)t^{d(k^1, k^2)}\hat{e}_{d(k^1, k^2)}$ (as a term of smaller degree would imply the existence of a subrepresentation of the $d(k^1, k^2)$-dimensional representation having the same slope). By the remark following \cite{reineke} Proposition 4.5 we have in fact\footnote{To compare with the results of \cite{reineke} section 4 one must take into account the different convention for the $q$-deformed product. Our functions $\widetilde{P}_{\mu}$, $\tilde{p}_d(q)$ are precisely those which arise when one replaces the product of \cite{reineke} Definition 3.1 with our \eqref{qProduct} (beware that in ibid. the notation $\bra -, - \ket$ denotes the Euler form, \emph{not} its antisymmetrization).}
\begin{equation*}
\tilde{p}_{d(k^1,k^2)}(q) = (q^{\frac{1}{2}} - q^{-\frac{1}{2}})^{-1}\widehat{P}(\mathcal{M}_{d(k^1,k^2)}(\mathcal{N}))(q).
\end{equation*} 
On the other hand by an argument contained in the proof of \cite{us} Proposition 4.3 we can find a change of variables which preserves slopes and reduces the calculation of the $\widetilde{P}_{\mu}$ for \eqref{quiverProduct} to the problem of saturating a scattering diagram for $\U_{\Gamma, R}$ with $\Gamma = \Z^2$ (endowed with its standard antisymmetric bilinear form). We can then combine the rest of the proof of \cite{us} Proposition 4.3 with Lemma \ref{ScatterToMult} above to compute the first nontrivial term in $\widetilde{P}_{\mu(k^1, k^2)}$ as $(q^{\frac{1}{2}} - q^{-\frac{1}{2}})^{-1}\widehat{N}^{\rm trop}_{(\alpha_1, \alpha_2)}({\bf w}(k^1), {\bf w}(k^2))$. Notice that there are no ramification factors precisely because $d(k^1, k^2)$ is thin and coprime, and we are computing the first nontrivial term. Matching the two answers gives \eqref{TropToPoin}.
\end{proof}
\begin{exa}
Consider the case $({\bf P}_1, {\bf P}_2) = (1 + 1, 1 + 1 + 1)$. We can compute the Poincar\'e polynomial e.g. by applying \cite{reinweist} Theorem 9.2 (see also \eqref{explicit} below), finding $P(\mathcal{M}({\bf P}_1, {\bf P}_2))(q) = 1 + 4 q + q^2$. On the other hand the only compatible weight vector ${\bf w}$ is the trivial refinement $({\bf w}_1, {\bf w}_2) = ((1 , 1), (1 , 1 , 1))$, with $\prod^2_{i = 1} |\Aut({\bf w}_i)| = 2 \cdot 3!$ and also (according to \eqref{qRam}) $\prod^2_{i = 1} \widehat{R}_{{\bf P}_i | {\bf w}_i} = 12$. Thus by Definition \ref{refinedGW} we have $\widehat{N}[({\bf P}_1, {\bf P}_2)] = \widehat{N}^{\rm trop}_{(\alpha_1, \alpha_2)}({\bf w})$ with $\alpha_1 = (1, 0), \alpha_2 = (0,1)$. For a suitable configuration of lines $\d_{ij}$ ($1\leq i \leq 2$, $1\leq j \leq 3$), the set $\mathcal{S}({\bf w}, \d_{ij})$ of rational tropical curves with ends weighted by ${\bf w}$ and lying on $\d_{ij}$ contains a curve of multiplicity $[1]_q [2]^2_q = q^{-1} + 2 + q$, and two distinct curves of multiplicity $[1]_q = 1$ (these curves are depicted in \cite{us} section 6.3.1). The tropical count therefore equals $q^{-1} + 4 + q = q^{-1} \widehat{P}(\mathcal{M}({\bf P}_1, {\bf P}_2))(q)$.  
\end{exa}

\begin{exa}
As an example which actually involves a nontrivial $q$-deformed ramification factor we consider $({\bf P}_1, {\bf P}_2) = (1 + 1, 1 + 2)$. In this case the relevant weight vectors are ${\bf w}' = ((1, 1), (1, 2))$, with $\prod^2_{i = 1} |\Aut({\bf w}'_i)| = 2$, $\prod^2_{i = 1} \widehat{R}_{{\bf P}_i | {\bf w}'_i} = -\frac{1}{2[2]_q}$, and ${\bf w}'' = ((1, 1), (1, 1,1))$, with $\prod^2_{i = 1} |\Aut({\bf w}''_i)| = 2\cdot 3!$, $\prod^2_{i = 1} \widehat{R}_{{\bf P}_i | {\bf w}''_i} = 6$. Setting $\alpha_1 = (1, 0), \alpha_2 = (0,1)$, we already computed $\widehat{N}^{\rm trop}_{(\alpha_1, \alpha_2)}({\bf w}'') = q^{-1} + 4 + q$. For a suitable configuration $\d_{ij}$ the set $\mathcal{S}({\bf w}', \d_{ij})$ containts a curve of multiplicity $[4]_q$, as well as two curves of multiplicity $[2]_q$. According to Definition \ref{refinedGW} we compute $\widehat{N}[({\bf P}_1, {\bf P}_2)] = -\frac{1}{2[2]_q} ([4]_q + 2[2]_q) + \frac{1}{2}(q^{-1} + 4 + q) = 1$. On the quiver side, one can in fact prove that in this case $\mathcal{M}({\bf P}_1, {\bf P}_2)$ is just a point (\cite{reinweist} section 5). 
\end{exa}

\begin{rmk} Theorem 4.1 in \cite{us} shows that the MPS formula (going from nonabelian to abelian representations) for $\chi(\mathcal{M}({\bf P}_1, {\bf P}_2))$ and primitive $(|{\bf P}_1|, |{\bf P}_2|)$ is dual to a standard degeneration formula in Gromov-Witten theory (going from incidence conditions to tangency conditions). The proof of Lemma \ref{comparison} gives another interpretation of the MPS formula at the level of Poincar\'e polynomials, as a compatibility condition between two natural $q$-deformations of the invariant $\widehat{N}[({\bf P}_1, {\bf P}_2)]$.   
\end{rmk}

\noindent We close this section with some further remarks concerning the $q$-deformed Gromov-Witten invariants $\widehat{N}[({\bf P}_1, {\bf P}_2)]$.

\subsection{Explicit formula} For primitive $(|{\bf P}_1|, |{\bf P}_2|)$, an explicit (and complicated) formula for the $q$-deformed Gromov-Witten invariant $\widehat{N}[({\bf P}_1, {\bf P}_2)]$ follows immediately from the result for quiver Poincar\'e polynomials \cite{reinweist} Theorem 9.2 (taking into account the dimension formula of ibid. Theorem 5.1): we have
\begin{align}\label{explicit}
\nonumber \widehat{N}[({\bf P}_1, {\bf P}_2)] = &(q - 1) q^{- \frac{1}{2}( 1 - \sum_{i} p^2_{1i} - \sum_{j} p^2_{2j} + \sum_{k, l} p_{1k} p_{2l})} \sum (-1)^{s - 1} q^{\sum_{r \leq s} a_r b_s - \sum_k p^r_{1k} p^s_{1 k} - \sum_l p^r_{2k} p^s_{2 k}}\\ &\prod^t_{r = 1} \left[\prod_k \prod^{p^r_{1k}}_{j = 1} (1 - q^{-j}) \prod_l \prod^{p^r_{2l}}_{j = 1}(1 - q^{-j})^{-1}\right],
\end{align}
where the sum runs over all decompositions ${\bf P}_i = {\bf P}^{(1)}_i + \cdots {\bf P}^{(t)}_i$ for $i = 1, 2$ into ordered partitions ${\bf P}^{(r)}_i = p^r_{i1} + \cdots + p^{r}_{i l_{i}}$ such that setting $(a_r, b_r) = (|{\bf P}^{r}_1|, |{\bf P}^{r}_2|)$ we have $(a_r, b_r)  \neq (0, 0)$ for all $r = 1, \ldots, t$ and 
\begin{equation*}
\frac{b_1 + \cdots + b_r}{a_1 + \cdots + a_r} > \frac{b}{a}
\end{equation*}
for $r < t$. Notice that it is not even clear from this formula that $\widehat{N}[({\bf P}_1, {\bf P}_2)]$ is in fact a symmetric Laurent polynomial.

\subsection{Relation to refined Severi degrees}  When the underlying toric surface is $\PP^2$, some of the $q$-deformed invariants $\widehat{N}[({\bf P}_1, {\bf P}_2)]$ have already been studied from a different point of view in \cite{vivek}, where they are called \emph{refined Severi degrees}. Suppose that $(a, b) = (1, 1)$ and $({\bf P}^d_1, {\bf P}^d_2)$ is the unique pair of partitions of type one, ${\bf P}^d_{ij} = 1$, such that $(|{\bf P}^d_1|, |{\bf P}^d_2|) = (d, d)$, 
\begin{equation*}
({\bf P}^d_1, {\bf P}^d_2) = (\overbrace{1 + \cdots + 1}^{d \textrm{ times}}, \overbrace{1 + \cdots + 1}^{d \textrm{ times}}).
\end{equation*}
Then the non-deformed invariant $N[({\bf P}_1, {\bf P}_2)]$ is effectively enumerating rational curves in $\PP^2$ with geometric genus $0$ and degree $d$, passing through $d$ prescribed general points on the line $D_1$, respectively $d$ prescribed general points on $D_2$, and which are maximally tangent to $D_{\rm out}$. Such a number is usually called a \emph{relative Severi degree}. We follow closely \cite{vivek} section 5 as a reference for Severi degrees; in standard Severi degree notation one would write
\begin{equation*}
N[({\bf P}_1, {\bf P}_2)] = N^{d, \delta_d}(0, e_{d}) \, \textrm{ for } \, \delta_d = \binom{d - 1}{2},
\end{equation*} 
as we now briefly recall. The invariant $N^{d, \delta_d}(0, e_{d})$ is an instance of the general relative Severi degrees $N^{d, \delta}(\alpha, \beta)$, counting curves in $\PP^2$ with degree $d$, geometric genus 
\begin{equation*}
g = \binom{d - 1}{2} - \delta
\end{equation*}
and tangency conditions along $D^{\rm out}$ encoded by the two ordered partitions $\alpha, \beta$, which moreover pass through $2 d + g - 1 + |\beta|$ additional prescribed points. For $k \geq 1$, there are $\alpha_k$ \emph{specified} tangency points of order $k$ on $D^{\rm out}$, as well as $\beta_k$ \emph{unspecified} points of order $k$, with the constraint
\begin{equation*}
\sum_i i \alpha_i + \sum_j j \beta_j = d. 
\end{equation*} 
The partition $e_k$ is a singleton at place $k$. The invariants $N^{d, \delta}(\alpha, \beta)$ are uniquely determined by the fundamental Caporaso-Harris recursion, \cite{vivek} Recursion 70.

Let us now turn to $q$-deformations. In \cite{vivek} section 5.1, the authors introduce a $q$-deformation $\overline{N}^{d, \delta}(\alpha, \beta)$ of the general invariants $N^{d, \delta}(\alpha, \beta)$ (for arbitrary genera), via a $q$-deformation of the Caporaso-Harris recursion, namely \cite{vivek} equation 17:
\begin{equation}\label{capharris}
\overline{N}^{d, \delta}(\alpha, \beta) = \sum_{k : \beta_k > 0} [k]_q \overline{N}^{d, \delta}(\alpha + e_k, \beta - e_k) + \sum_{\alpha', \beta'} \prod_i [i]^{\beta'_i - \beta_i}_q \binom{\alpha}{\alpha'} \binom{\beta'}{\beta} \overline{N}^{d - 1, \delta'}(\alpha', \beta').
\end{equation}
The second sum runs through all $\alpha', \beta', \delta'$ satisfying 
\begin{align*}
& \alpha'_i \leq \alpha_i, \beta'_j \geq \beta_j,\\
& \sum_i i \alpha'_i + \sum_j j \beta'_j = d - 1,\\
& \delta' = \delta - d + 1 + \sum_j (\beta'_j - \beta_j),
\end{align*}
and for partitions $\alpha, \alpha'$ one sets
\begin{equation*}
\binom{\alpha}{\alpha'} = \prod_i \binom{\alpha_i}{\alpha'_i}. 
\end{equation*}
The initial conditions for \eqref{capharris} are given by $\overline{N}^{d, \delta}(\alpha, \beta) = 0$ whenever 
\begin{equation*}
\binom{d+2}{2} - d -1 + \sum_j \beta_j - \delta \leq 0, 
\end{equation*}
except for the single case
\begin{equation*}
\overline{N}^{1, 0}(e_1, 0) = 1.
\end{equation*}

As discussed at length in \cite{vivek} section 1, the invariants $\overline{N}^{d, \delta}(\alpha, \beta)$ conjecturally give the most natural $q$-deformation of the classical (relative) Severi degrees; in particular it is conjectured (\cite{vivek} Conjecture 75) that for fixed $\delta$ and $d \geq \frac{d}{2} + 1$ the refined Severi degrees $\overline{N}^{d, \delta}(0, d e_1)$ are expressible in terms of the Hodge theory of the relative Hilbert schemes of points over the linear system $|\mathcal{O}_{\PP^2}(d)|$, providing a refinement of the analogous fundamental result in \cite{richard} which links $N^{d, \delta}(0, d e_1)$ to the topological Euler characteristic of the same Hilbert schemes.

In \cite{vivek} section 6 the authors sketch the definition of Block-G\"ottsche tropical invariants $\overline{N}^{d, \delta}_{\rm trop}(\alpha, \beta)$ which enumerate tropical curves immersed in $\R^2$ via the Block-G\"ottsche multiplicity of our Definition \ref{BGmult}, and announce a proof by Block and G\"ottsche that the invariants $\overline{N}^{d, \delta}_{\rm trop}(\alpha, \beta)$ satisfy the $q$-Caporaso-Harris recursion \eqref{capharris}, with the same initial conditions as the $N^{d, \delta}(\alpha, \beta)$. This result, together with a complete treatment of the invariants $\overline{N}^{d, \delta}_{\rm trop}(\alpha, \beta)$, has very recently appeared in \cite{gottsche} (Theorem 1.1). Therefore one has in general
\begin{equation*}
\overline{N}^{d, \delta}(\alpha, \beta) = \overline{N}^{d, \delta}_{\rm trop}(\alpha, \beta).
\end{equation*}
It follows that 
\begin{align*}
\widehat{N}[({\bf P}^d_1, {\bf P}^d_2)] &= \widehat{N}^{\rm trop}[({\bf P}^d_1, {\bf P}^d_2)]\quad \textrm{(by our Definition \ref{refinedGW})}\\
&= \overline{N}^{d, \delta_d}_{\rm trop}(0, e_d)\quad \textrm{(by the definition of \cite{vivek} section 6})\\
&= \overline{N}^{d, \delta_d}(0, e_d)\quad \textrm{(by \cite{gottsche} Theorem 1.1)}. 
\end{align*}
Thus our $q$-deformed invariants for $\PP^2$ given by $\widehat{N}[({\bf P}^d_1, {\bf P}^d_2)]$ are refined Severi degrees of $\PP^2$ in the sense of \cite{vivek} section 5, and can be computed effectively via the $q$-Caporaso-Harris recursion \eqref{capharris}. From this point of view, we may regard the invariants $\widehat{N}[({\bf P}_1, {\bf P}_2)]$ as a generalisation of the genus $0$ refined Severi degrees, allowing more general incidence conditions and weighted projective planes.\\ 

\begin{exa}
We compute $\widehat{N}[(1 + 1 + 1, 1 + 1 + 1)]$ via the $q$-Caporaso-Harris recursion \eqref{capharris}. By the discussion above, this is the ``$q$-number" of nodal cubics in $\PP^2$ which pass through $6$ prescribed points and are maximally tangent to a line. We find
\begin{align*}
\widehat{N}[(1 + 1 + 1, 1 + 1 + 1)] &= \overline{N}^{3, 1}(0, e_3)\\
&= [3]_q \overline{N}^{3, 1}(e_3, 0)\\
&= [3]_q \left([2]_q  \overline{N}^{2, 0}(0, e_2) + \overline{N}^{2, 1}(0, 2 e_1)\right)\\
&= [3]_q \left([2]_q  [2]_q + 2\right).
\end{align*}
This is a deformation of the classical result $\widehat{N}(1 + 1 + 1, 1 + 1 + 1) = \overline{N}^{3, 1}(0, e_3) = 18$ (see e.g. \cite{gps} section 6.4). The same result can be obtained tropically. For example, choosing $6$ points in $\R^2$ given by
\begin{align*}
\left(- k, 0 \right), \left(- k, \frac{1}{2}\right), \left(- k, \frac{3}{2}\right); \left(0, - k\right), \left(1, - k\right), \left(2, - k\right)    
\end{align*}
for sufficiently large $k > 0$ the invariant $\widehat{N}[(1 + 1 + 1, 1 + 1 + 1)]$ enumerates precisely one rational tropical curve of multiplicity $[3]_q [2]_q  [2]_q$, and two curves of multiplicity $[3]_q$, passing through the prescribed points, and with a single outgoing infinite edge.
\end{exa}

\subsection{Nonprimitive classes.} Let $(|{\bf P}_1|, |{\bf P}_2|)$ be primitive. For $k \geq 1$, the $q$-deformed invariant $\widehat{N}[(k {\bf P}_1, k {\bf P}_2)]$ is in general a Laurent polynomial in $\Q[q^{\pm \frac{1}{2}}]$. It seems natural to ask if the BPS structure described in \cite{gps} sections 6.2 and 6.3 admits a $q$-deformation. We do not know at present how to deform the multi-cover formula \eqref{BPS}, but we can at least give some expectation concerning integrality. 

As pointed out in \cite{gps} section 6.3, the actual BPS invariants $n_{d}[({\bf P}_1, {\bf P}_2)]$ (defined through \eqref{BPS}) are integral if and only if the numbers $n'_{d}[({\bf P}_1, {\bf P}_2)]$, uniquely determined by the simpler formula
\begin{equation}\label{BPS2}
N[(k{\bf P}_1, k{\bf P}_2)] = \sum_{d\geq 1, d | k} \frac{(-1)^{d-1}}{d^2} n'_d[({\bf P}_1, {\bf P}_2)]
\end{equation}
are integral. In view of \eqref{qRam}, the relation \eqref{BPS2} has the natural $q$-deformation 
\begin{equation*}
\widehat{N}[(k{\bf P}_1, k{\bf P}_2)] = \sum_{d\geq 1, d | k} \frac{(-1)^{d-1}}{d [d]_q} \widehat{n}'_d[({\bf P}_1, {\bf P}_2)],
\end{equation*}
which uniquely determines Laurent polynomials $\widehat{n}'_d[({\bf P}_1, {\bf P}_2)] \in \Q[q^{\pm \frac{1}{2}}]$. In analogy to \cite{gps} Conjecture 6.2 one may expect that integrality holds, namely $\widehat{n}'_d[({\bf P}_1, {\bf P}_2)] \in \Z[q^{\pm \frac{1}{2}}]$.\\

\begin{exa} To $q$-deform the BPS calculation $n_2[(1, 1+1)] = 0$, we first enumerate tropical embeddings by hand, computing the Block-G\"ottsche invariants 
\begin{align*}
\widehat{N}^{\rm trop}((2), (2, 2)) &= [4]^2_q,\\
\widehat{N}^{\rm trop}((2), (1, 1, 2)) &= [2]^2_q [4]_q,\\
\widehat{N}^{\rm trop}((2), (1, 1, 1, 1)) &= [2]^4_q,\\
\widehat{N}^{\rm trop}((1,1), (2, 2)) &= [2]^2_q [4]_q,\\
\widehat{N}^{\rm trop}((1,1), (1, 1, 2)) &= 2 [2]^2_q + [2]_q [4]_q,\\
\widehat{N}^{\rm trop}((1,1), (1, 1, 1, 1)) &= 2 [2]_q + [2]^3_q.\\
\end{align*}
Summing these up with the correct $q$-ramification factors according to Definition \ref{refinedGW} yields
\begin{equation*}
\widehat{N}[(2, 2 + 2)] = -\frac{1}{2[2]_q}
\end{equation*}
as expected; thus in this case we have
\begin{equation*}
\widehat{n}'_2[(1, 1 + 1)] = \widehat{N}[(2, 2 + 2)] + \frac{1}{2[2]_q} \widehat{N}[(1, 1 + 1)] = 0.
\end{equation*}
\end{exa}

\noindent The lack of effective ways to compute $\widehat{N}[(k{\bf P}_1, k{\bf P}_2)]$ prevents more interesting checks of integrality for now.  

\subsection{Relation to motivic Donaldson-Thomas invariants.} We make a brief remark motivated by the very interesting work in progress of P. Bousseau and R. Thomas \cite{pierrick}.  As we mentioned in section \ref{background}, the invariants $\widehat{N}[({\bf P}_1, {\bf P}_2)]$ are defined in \cite{gps} section 0.4 by enumerating rational curves on the surface $p\!:S \to \PP(|{\bf P}_1|, |{\bf P}_2|, 1)$ which is the blowup of the weighted projective plane at the points where we impose incidence conditions.  The incidence conditions are recorded in the homology class 
\begin{equation*}
\beta = p^* \beta_k - \sum_{i, j} p_{ij} E_{ij},
\end{equation*}
where $k$ is the greatest common divisor of $(|{\bf P}_1|, |{\bf P}_2|)$, $\beta_k$ is the unique class with intersection numbers $\beta_k \cdot D_i = |{\bf P}_i|$, $\beta_k \cdot D_{\rm out} = k$, and the $E_{i j}$ denote the exceptional divisors. In their work in progress, Bousseau and Thomas relate the relative Gromov-Witten invariant $\widehat{N}[({\bf P}_1, {\bf P}_2)]$ on $S$ to certain Joyce-Song invariants virtually enumerating coherent sheaves on the total space of the canonical bundle $\pi\!: K_S \to S$. More precisely, one has
\begin{equation*}
N[({\bf P}_1, {\bf P}_2)] = \overline{\Omega}_{K_S}[({\bf P}_1, {\bf P}_2)] 
\end{equation*}
where $\overline{\Omega}_{K_S}[({\bf P}_1, {\bf P}_2)]$ denotes the Joyce-Song $\Q$-valued count (see \cite{joyce}) virtually enumerating rank $0$ sheaves on $K_S$ with vanishing holomorphic Euler characteristic and support class $\beta$. Therefore it seems natural to expect an equality
\begin{equation*}
\widehat{N}[({\bf P}_1, {\bf P}_2)] = \overline{\Omega}^{\rm ref}_{K_S}[({\bf P}_1, {\bf P}_2)] 
\end{equation*}
with the corresponding motivic Donaldson-Thomas invariant $\overline{\Omega}^{\rm ref}_{K_S}[({\bf P}_1, {\bf P}_2)]$ in the sense of \cite{ks}. As far as we know the construction of motivic Donaldson-Thomas invariants in this generality is still partly conjectural. 
\appendix
\setcounter{secnumdepth}{0}
\section{}

We sketch a proof of the invariance of the counts $\widehat{N}^{\rm trop}_{(\alpha_1, \alpha_2)}({\bf w})$ under a general choice of ends. We follow the direct approach of \cite{gathmark} in the numerical case (although their methods are more general and work for arbitrary genus). As we mentioned this is a very special case of the approach in the proof of \cite{mikhalkin} Theorem 1. According to the first part of the proof of Theorem 4.8 in \cite{gathmark} we only need to check that $\widehat{N}^{\rm trop}_{(\alpha_1, \alpha_2)}({\bf w})$ remains constant when we cross a codimension $1$ locus which corresponds to a rational curve $C$ with a single $4$-valent vertex $V$ (Figure \ref{codim1}). 
\begin{figure}[ht] 
\centerline{\includegraphics[scale=.7]{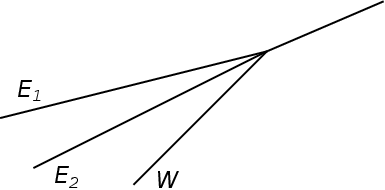}}
\caption{A codimension 1 singularity - first case}
\label{codim1}
\end{figure}
It is enough to show that $\widehat{N}^{\rm trop}_{(\alpha_1, \alpha_2)}({\bf w})$ remains constant when we move just one of the ends $\mathfrak{d}$ with direction $\alpha_2$. Since our curves are rational, the edges $E_1, E_2, W$ have pairwise disjoint sets of ancestors - ends $\mathfrak{d}$ from which they derive. We can assume that the way we degenerate to Figure \ref{codim1} is by moving a single end $\mathfrak{d}$ in the ancestors of $W$: the other two cases (when we displace an ancestor of $E_1, E_2$) are completely analogous. Let us denote by $u_1, u_2$ and $u$ the primitive integral vectors pointing in the direction of $E_1, E_2$ and $W$ respectively, and set $v_1 = w(E_1) u_1$, $v_2 = w(E_2) u_2$ and $w = w(W) u$. Write $\mathfrak{d}^{\pm}$ for the end $\mathfrak{d} \pm \eps \alpha^{\perp}_2$ for sufficiently small $\eps > 0$. In the following we denote by $\mu$ the ordinary slope of vectors in $\R^2$; this should cause no confusion with the tropical multiplicity. Assume at first that 
\begin{equation*}
\mu(v_2 + w) < \mu(v_1).
\end{equation*}
Then there are precisely two curves with $\mathfrak{d}$ replaced by $\mathfrak{d}^-$: nearby Figure \ref{codim1}, they are the curves in Figures \ref{smoothing2}, \ref{smoothing3}.
\begin{figure}[H]
\centerline{\includegraphics[scale=.7]{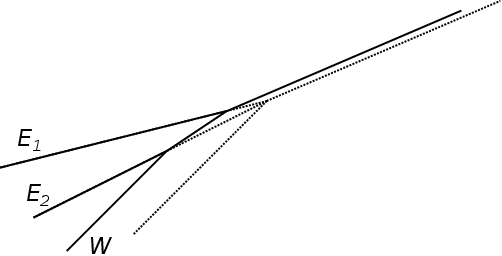}}
\caption{$\mathfrak{d} \to \mathfrak{d}^-$ (first curve)}
\label{smoothing2}
\end{figure}  

\begin{figure}[H]
\centerline{\includegraphics[scale=.7]{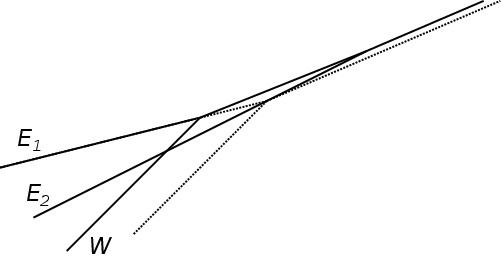}}
\caption{$\mathfrak{d} \to \mathfrak{d}^-$ (second curve)}
\label{smoothing3}
\end{figure}  

\noindent Also there is precisely one curve with $\mathfrak{d}$ replaced by $\mathfrak{d}^+$: nearby Figure \ref{codim1}, it looks as in Figure \ref{smoothing1}.

\begin{figure}[H]
\centerline{\includegraphics[scale=.7]{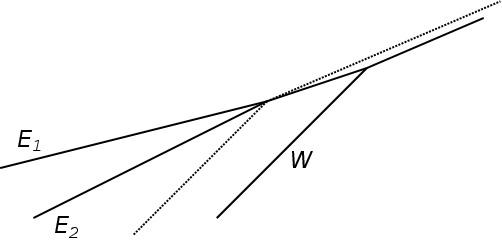}}
\caption{$\mathfrak{d} \to \mathfrak{d}^+$}
\label{smoothing1}
\end{figure}  

\noindent Let us check that $\widehat{N}^{\rm trop}_{(\alpha_1, \alpha_2)}({\bf w})$ remains constant through the codimension 1 wall. We set $m_q = \prod_{V' \neq V}\mu_q(V')$, the product over all trivalent vertices of $C$. The contribution of the $\mathfrak{d}^-$ curves is
\begin{align*}
&m_q (q^{\frac{1}{2}} - q^{-\frac{1}{2}})^{-2}(q^{\frac{1}{2} \bra v_2, w\ket} - q^{-\frac{1}{2} \bra v_2, w\ket})(q^{\frac{1}{2} \bra v_1,  v_2 + w\ket} - q^{- \frac{1}{2} \bra v_1, v_2 + w\ket})+\\
&m_q (q^{\frac{1}{2}} - q^{-\frac{1}{2}})^{-2}(q^{\frac{1}{2} \bra v_1, w\ket} - q^{- \frac{1}{2} \bra v_1, w\ket})(q^{\frac{1}{2} \bra v_1 + w,  v_2\ket} - q^{- \frac{1}{2} \bra v_1 + w, v_2\ket}),
\end{align*}
while the $\mathfrak{d}^+$ curve contributes
\begin{equation*}
m_q (q^{\frac{1}{2}} - q^{-\frac{1}{2}})^{-2}(q^{\frac{1}{2} \bra v_1, v_2\ket} - q^{- \frac{1}{2} \bra v_1, v_2\ket})(q^{\frac{1}{2} \bra v_1 + v_2,  w\ket} - q^{- \frac{1}{2} \bra v_1 + v_2, w\ket}).
\end{equation*}
Expanding this out we find a sum of four terms,
\begin{align*}
a_1 &= m_q (q^{\frac{1}{2}} - q^{-\frac{1}{2}})^{-2} q^{\frac{1}{2} (\bra v_1, v_2\ket + \bra v_1, w\ket + \bra v_2, w\ket )},\\ 
a_2 &= - m_q (q^{\frac{1}{2}} - q^{-\frac{1}{2}})^{-2} q^{\frac{1}{2} (\bra v_1, v_2\ket - \bra v_1, w\ket - \bra v_2, w\ket )},\\
a_3 &= -m_q (q^{\frac{1}{2}} - q^{-\frac{1}{2}})^{-2} q^{\frac{1}{2}  ( -\bra v_1, v_2\ket + \bra v_1, w\ket + \bra v_2, w\ket )},\\
a_4 &= m_q (q^{\frac{1}{2}} - q^{-\frac{1}{2}})^{-2} q^{- \frac{1}{2} ( \bra v_1, v_2\ket + \bra v_1, w\ket + \bra v_2, w\ket )}.
\end{align*}
On the other expanding the contribution of Figure \ref{smoothing2} we find
\begin{equation*}
m_q (q^{\frac{1}{2}} - q^{-\frac{1}{2}})^{-2}(a_1 - q^{\frac{1}{2} ( \bra v_2, w\ket - \bra v_1, v_2\ket - \bra v_1, w\ket )} - q^{\frac{1}{2} ( \bra v_1, v_2\ket + \bra v_1, w\ket - \bra v_2, w\ket )} + a_4),
\end{equation*}
and similarly for Figure \ref{smoothing3}
\begin{equation*}
m_q (q^{\frac{1}{2}} - q^{-\frac{1}{2}})^{-2} (q^{\frac{1}{2} ( \bra v_1, w\ket + \bra v_1, v_2\ket + \bra w, v_2\ket )} + a_2 + a_3 + q^{- \frac{1}{2}  (\bra v_1, w\ket + \bra v_1, v_2\ket +\bra w, v_2\ket )}).
\end{equation*}
So we immediately check that the extra terms cancel out and $\widehat{N}^{\rm trop}_{(\alpha_1, \alpha_2)}({\bf w})$ is preserved. The other situation that we need to consider is when 
\begin{equation*}
\mu(v_2 + w) \geq \mu(v_1).
\end{equation*}
Suppose that the strict inequality holds. This corresponds to a modified picture for the $4$-valent vertex, as in Figure \ref{codim1_B}.
\begin{figure}[H]
\centerline{\includegraphics[scale=.7]{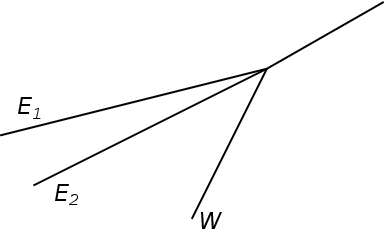}}
\caption{A codimension 1 singularity - second case}
\label{codim1_B}
\end{figure}
\noindent In this case there is a single curve when we replace $\mathfrak{d}$ by $\mathfrak{d}^-$, as in Figure \ref{smoothing2_B}.
\begin{figure}[H]
\centerline{\includegraphics[scale=.7]{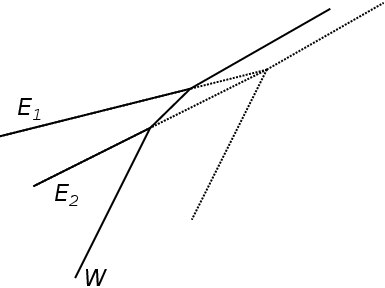}}
\caption{$\mathfrak{d} \to \mathfrak{d}^-$}
\label{smoothing2_B}
\end{figure}  
\noindent And there are two curves with $\mathfrak{d}$ replaced by $\mathfrak{d}^+$, as in Figures \ref{smoothing1_B} and \ref{smoothing3_B}.
\begin{figure}[H]
\centerline{\includegraphics[scale=.7]{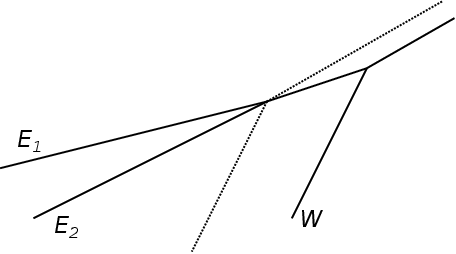}}
\caption{$\mathfrak{d} \to \mathfrak{d}^+$ (first curve)}
\label{smoothing1_B}
\end{figure}  
\begin{figure}[H]
\centerline{\includegraphics[scale=.7]{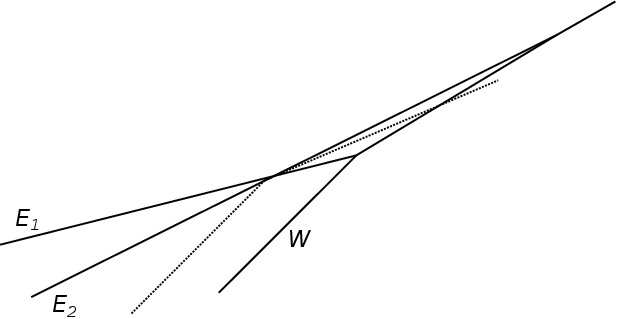}}
\caption{$\mathfrak{d} \to \mathfrak{d}^+$ (second curve)}
\label{smoothing3_B}
\end{figure}  
\noindent The contribution of the $\mathfrak{d}^-$ curve is 
\begin{equation*}
m_q (q^{\frac{1}{2}} - q^{-\frac{1}{2}})^{-2}(q^{\frac{1}{2} \bra v_2, w\ket} - q^{-\bra v_2, w\ket})(q^{\frac{1}{2} \bra v_1, v_2 + w\ket} - q^{- \frac{1}{2}  \bra v_1, v_2 + w\ket}),
\end{equation*}
which we expand as the sum of four terms
\begin{align*}
a'_1 &= m_q (q^{\frac{1}{2}} - q^{-\frac{1}{2}})^{-2} q^{\frac{1}{2} ( \bra v_2, w\ket + \bra v_1, v_2\ket + \bra v_1, w\ket )},\\ 
a'_2 &= - m_q (q^{\frac{1}{2}} - q^{-\frac{1}{2}})^{-2} q^{\frac{1}{2} ( \bra v_2, w\ket - \bra v_1, v_2\ket - \bra v_1, w\ket )},\\
a'_3 &= -m_q (q^{\frac{1}{2}} - q^{-\frac{1}{2}})^{-2} q^{\frac{1}{2}  ( -\bra v_2, w\ket + \bra v_1, v_2\ket + \bra v_1, w\ket )},\\
a'_4 &= m_q (q^{\frac{1}{2}} - q^{-\frac{1}{2}})^{-2} q^{- \frac{1}{2} ( \bra v_2, w\ket + \bra v_1, v_2\ket + \bra v_1, w\ket )}.
\end{align*}
The contribution of Figure \ref{smoothing1_B} is
\begin{align*}
&m_q (q^{\frac{1}{2}} - q^{-\frac{1}{2}})^{-2}(q^{\frac{1}{2} \bra v_1, v_2\ket} - q^{-\frac{1}{2} \bra v_1, v_2\ket})(q^{\frac{1}{2} \bra v_1 + v_2, w\ket} - q^{- \frac{1}{2} \bra v_1 + v_2, w\ket})\\
&= m_q (q^{\frac{1}{2}} - q^{-\frac{1}{2}})^{-2}(a'_1 - q^{\frac{1}{2} ( \bra v_1, v_2\ket - \bra v_1, w\ket  - \bra v_2, w\ket )} - q^{ \frac{1}{2}  (- \bra v_1, v_2\ket + \bra v_1, w\ket  + \bra v_2, w\ket )} + a'_4),
\end{align*}
and that of Figure \ref{smoothing3_B}
\begin{align*}
&m_q (q^{\frac{1}{2}} - q^{-\frac{1}{2}})^{-2}(q^{\frac{1}{2} \bra v_1, w\ket} - q^{- \frac{1}{2} \bra v_1, w\ket})(q^{\frac{1}{2} \bra v_2, v_1 + w\ket} - q^{- \frac{1}{2}  \bra v_2, v_1 + w\ket})\\
&= m_q (q^{\frac{1}{2}} - q^{-\frac{1}{2}})^{-2}(q^{\frac{1}{2} ( \bra v_1, w\ket + \bra v_2, w\ket  + \bra v_2, v_1 \ket )} + a'_2 + a'_3 + q^{- \frac{1}{2} (\bra v_1, w\ket + \bra v_2, v_1\ket + \bra v_2, w\ket )}).
\end{align*}
Again the extra terms cancel out and $\widehat{N}^{\rm trop}_{(\alpha_1, \alpha_2)}({\bf w})$ is preserved.

\bibliographystyle{amsalpha}

\end{document}